\newtheorem{theorem}{Theorem}\numberwithin{theorem}{section}
\newtheorem{defn}[theorem]{Definition}
\newtheorem{prop}[theorem]{Proposition}
\newtheorem{cor}[theorem]{Corollary}
\newtheorem{rem}[theorem]{Remark}
\numberwithin{equation}{section}
\def\Hom{\operatorname{Hom}}  
\def\rank{\operatorname{rank}} 
\def\Res{\operatorname{Res}}
\def\L{\mathbb L}
\def\x{\bm{x}} 
\def\y{\bm{y}}
\newcommand{\Q}{\mathbb{Q}}
\newcommand{\Z}{\mathbb{Z}}
\newcommand{\C}{\mathbb{C}}
\newcommand{\F}{\mathcal{F}}
\newcommand{\hooklongrightarrow}{\lhook\joinrel\longrightarrow}
\title
{
  Darondeau--Pragacz formulas  
 in complex cobordism
}
\author{Masaki Nakagawa    and Hiroshi Naruse}
\address{Graduate School of  Education \endgraf
                       Okayama University \endgraf
                       Okayama  700-8530 \\ Japan 
}
\email{nakagawa@okayama-u.ac.jp}
\address{Graduate School of Education \endgraf 
                        University of Yamanashi   \endgraf
                       Kofu  400-8510 \\ Japan
}
\email{hnaruse@yamanashi.ac.jp} 
\subjclass[2010]{05E05,   14M15, 14N15,  55N20, 55N22, 57R77
                      }
\keywords{Complex cobordism theory, Gysin map, Push-forward, Flag bundles}
\begin{document}

\begin{abstract}
       In this paper, we generalize  the push-forward (Gysin) formulas 
       for flag bundles  in ordinary cohomology theory, which are 
      due to Darondeau--Pragacz,  to the complex cobordism theory. 
      Then,  we   introduce the {\it universal 
      quadratic Schur functions}, which are a generalization of the 
      (ordinary) quadratic Schur functions introduced by Darondeau--Pragacz, 
         and establish some Gysin formulas for the universal 
          quadratic Schur functions  as an application of our Gysin formulas. 
\end{abstract}

\maketitle


\section{Introduction}     \label{sec:Introduction}  
In  \cite{Darondeau-Pragacz2017},   Darondeau and Pragacz  established 
 push-forward 
(Gysin) formulas for all flag bundles  of types $A$, $B$, $C$, and $D$ in the 
framework of the Chow theory or  ordinary cohomology theory.  In this paper, 
we shall call their formulas {\it Darondeau--Pragacz formulas}.\footnote{
D--P formulas for short.  
}  
The main  idea of their  proof  is  to iterate the well-known push-forward formula 
for a projective bundle in types $A$ and $C$,  and for a quadric bundle in types $B$ and $D$.  
Thus,   the method  used in their paper is quite simple. 
Nevertheless,  the obtained formulas are quite useful and have many applications.  
For example, they used their formulas to obtain a new determinantal formula for 
Schur functions.  Moreover,  they introduced the {\it quadratic Schur functions}, 
which are the ``type $BCD$ analogues'' of the ordinary Schur functions, 
and established some Gysin formulas involving these functions.  
The purpose of the present paper is  two-fold:  First, we  generalize the D--P formulas 
in the ordinary cohomology theory to  the complex cobordism theory. 
Since the complex cobordism theory is {\it universal} 
among {\it complex-oriented} (in the sense of Adams 
\cite{Adams1974}) {\it generalized cohomology theories} 
(see Quillen \cite{Quillen1969}), our formulas  readily  yield 
the Gysin formulas for flag bundles in other complex-oriented 
generalized cohomology theories under the specialization of 
the universal formal group law.  For instance, one can obtain 
the Gysin formulas for flag bundles in the {\it complex $K$-theory}.  
Second,  we  generalize the quadratic Schur functions to the complex cobordism 
theory. 
When performing this generalization, we utilize  the Gysin formulas 
as a useful tool  to obtain the characterization of the quadratic Schur functions 
and their generalization.

To accomplish our  first  purpose, we need to formulate the following 
two items: 
\begin{itemize} 
\item  Segre classes of a complex vector bundle  in complex cobordism, 
\item  Gysin formula for a projective bundle in complex cobordism. 
\end{itemize} 
Recently Hudson--Matsumura \cite{Hudson-Matsumura2019} 
introduced the Segre classes  of a complex vector bundle 
in the {\it algebraic cobordism theory}, and their definition applies to the 
complex cobordism theory as well (see \S \ref{subsubsec:SegreClasses(ComplexCobordism)}).  
As for the latter,  Quillen \cite{Quillen1969} described 
the Gysin homomorphism of a projective bundle using the {\it residue 
symbol}
(see  \S \ref{subsubsec:FundamentalGysinFormulaProjectiveBundle(ComplexCobordism)}).  
Having prepared these two notions,  we can  establish 
the Gysin formulas for  full flag bundles   along the same lines 
as in \cite{Darondeau-Pragacz2017}.    To derive the formulas for 
general partial flag bundles from those of full flag bundles,  we utilize
the same idea as described in Damon \cite{Damon1973}.  In this process, 
we noticed  that  the {\it universal Schur functions} (corresponding to 
the empty partition) were needed. These functions are the complex cobordism 
analogues of the ordinary Schur polynomials, and were  first 
introduced by Fel'dman \cite{Fel'dman2003} (see also 
Nakagawa--Naruse \cite{Nakagawa-Naruse2016}). 
Using these functions, we can generalize  the D--P formulas in 
the ordinary cohomology theory  to the complex cobordism theory. 
Our main results are Theorems \ref{thm:TypeAD-PFormula(ComplexCobordism)}, 
\ref{thm:TypeCD-PFormula(ComplexCobordism)}, and \ref{thm:TypeBDD-PFormula(ComplexCobordism)}.   
Furthermore, it turned out that  most of the results 
established in \cite{Darondeau-Pragacz2017} can be generalized to 
the complex cobordism setting.    
In fact,   a   characterization of 
the  quadratic Schur functions  via  Gysin formulas  
immediately  leads to the definition of 
 the {\it  universal quadratic Schur functions}  
(see Definition \ref{defn:DefinitionUQSF}).  
  Then,  as applications of our Gysin formulas in comlex cobordism, 
we   give a certain Gysin formula (Proposition \ref{prop:GeneralizationPragacz-RatajskiFormula(ComplexCobordism)}), 
which is a complex cobordism analogue of the Pragacz--Ratajski formula \cite{Pragacz-Ratajski1997}. 
 The {\it generating function}  for  the universal quadratic Schur functions
is also obtained (Theorem \ref{thm:GFUQSF}),  
which immediately yields a determinantal  formula for 
the $K$-theoretic quadratic Schur functions under
the specialization (Theorem \ref{thm:DeterminantalFormulaKQSF}).\footnote{
For 
further applications of Gysin formulas in complex cobordism and techniques of 
generating functions,  readers are 
referred to a companion paper \cite{Nakagawa-Naruse(GFUHLPQF)}.  
}

\subsection{Organization of the paper}
The remainder of this paper is organized as follows:   Section  \ref{sec:NotationConventions} is 
a  preliminary section, that  gives a brief account of 
the complex cobordism theory,  universal formal group law, 
universal Schur functions, and flag bundles associated with vector bundles, 
  which will be used throughout this 
paper.  Section  \ref{sec:D-PFormulas(ComplexCobordism)} is the main body 
of the paper, and establishes the D--P formulas of types $A$, $B$, $C$, and 
$D$  in complex cobordism.  As mentioned above,  three items, namely, 
Segre classes in complex cobordism, Gysin formula for a projective bundle  
in complex cobordism, and universal Schur functions, play a significant role. 
The final section, Section \ref{sec:Applications}, deals with  
applications of our Gysin formulas, and discusses various properties of the 
universal quadratic Schur functions in detail.  In Appendix (Section 5), 
 Quillen's residue  formula will be computed explicitly.

\vspace{0.3cm}  

\textbf{Acknowledgments.} \quad    
We would like to thank Piotr Pragacz and Tomoo Matsumura  for 
 valuable discussions, and
Eric Marberg for pointing  out an ambiguity in our 
treatment of the residue symbol $\underset{t = 0}{\mathrm{Res}'}$ 
(see \S \ref{subsubsec:FundamentalGysinFormulaProjectiveBundle(ComplexCobordism)}
and \S \ref{subsec:QuillenResidueFormula}).   
Thanks are also due to the anonymous referee whose careful reading and useful comments 
greatly improved our previous manuscript.   
The  first author is partially supported by 
JSPS KAKENHI Grant Number  JP18K03303, Japan Society for  the Promotion of Science. 
The second author is partially supported by 
JSPS KAKENHI Grant Number  JP16H03921, Japan Society for  the Promotion of Science.

\section{Notation and conventions}  \label{sec:NotationConventions} 
\subsection{Complex cobordism theory}    \label{subsec:ComplexCobordismTheory}  
{\it Complex cobordism theory} $MU^{*}(-)$ is a generalized cohomology 
theory associated with the {\it Milnor--Thom spectrum} $MU$
(for a detailed account of the  complex cobordism theory,  readers are referred to 
e.g.,  Adams \cite{Adams1974}). 
According to Quillen \cite[Proposition 1.2]{Quillen1971}, 
for a manifold $X$, $MU^{q}(X)$ can be identified with 
the set of  {\it cobordism classes}  of {\it proper}, {\it complex-oriented} 
maps of dimension $-q$.  Thus,  a map of manifolds $f: Z  \longrightarrow X$, which is 
complex-oriented in the sense of Quillen, determines a class 
denoted by $[Z \overset{f}{\rightarrow} X]$, or   simply $[Z]$ 
in $MU^{q}(X)$.   The coefficient ring of this theory is given by 
$MU^{*} := MU^{*}(\mathrm{pt})$, where $\mathrm{pt}$ is a space 
consisting of a single point.    With this geometric interpretation of 
$MU^{*}(X)$,  the {\it Gysin map} can be defined as follows:  
For a proper complex-oriented map $g: X \longrightarrow Y
$ of dimension $d$, the Gysin map 
\begin{equation*} 
    g_{*}:  MU^{q}(X)  \longrightarrow  MU^{q - d}(Y)
\end{equation*} 
is defined by sending the cobordism class $[Z \overset{f}{\rightarrow} X]$ 
into the class $[Z  \overset{g \circ f}{\rightarrow}  Y]$.

Complex cobordism theory $MU^{*}(-)$ is equipped with the ``generalized'' 
Chern  classes. 
More precisely, for a rank $n$ complex vector bundle $E$  over a space 
$X$, one can define the {\it $MU^{*}$-theory Chern classes}  
$c^{MU}_{i}(E)  \in MU^{2i}(X)$ for $i  = 0, 1,  \ldots, n$, 
which have the usual properties of the ordinary 
Chern classes in cohomology (see Conner-Floyd \cite[Theorem 7.6]{Conner-Floyd1966}, 
Switzer \cite[Theorem 16.2]{Switzer1975}).

Let $\C P^{\infty}$ be an  infinite complex projective space, and 
$\eta_{\infty}  \longrightarrow \C P^{\infty}$ the {\it Hopf line bundle} 
on $\C P^{\infty}$.     Note that  $\C P^{\infty}$ is homotopy equivalent 
to the classifying space $BU(1)$ of the unitary group $U(1)$.  
Let $x = x^{MU}$ be the $MU^{*}$-theory first Chern class of the line bundle 
$\eta_{\infty}^{\vee}$, dual of $\eta_{\infty}$.  Then,   it is well-known that 
$MU^{*}(\C P^{\infty})  \cong MU^{*} [[x ]]$.  
 Denote    the natural projection onto the $i$-th factor ($i = 1, 2$) 
by $\pi_{i}: \C P^{\infty} \times \C P^{\infty} 
\longrightarrow \C P^{\infty}$.   Then,   the {\it product map} $\mu:  \C P^{\infty} 
\times \C P^{\infty}  \longrightarrow \C P^{\infty}$ is defined as the 
classifying map  of the line bundle $\pi_{1}^{*}\eta_{\infty} \otimes \pi_{2}^{*} \eta_{\infty}$ 
over $\C P^{\infty} \times \C P^{\infty}$.  
Applying the functor $MU^{*}(-)$ to the  map 
$\mu$,   we obtain 
\begin{equation*} 
    \mu^{*}:   MU^{*}(\C P^{\infty}) \cong   MU^{*}[[x ]]  
          \longrightarrow   MU^{*}(\C P^{\infty} \times \C P^{\infty}) 
 \cong MU^{*} [[x_{1}, x_{2}]], 
\end{equation*} 
where $x_{i}  = x_{i}^{MU}$ is the $MU^{*}$-theory first Chern class 
of the line bundle $\pi_{i}^{*} \eta_{\infty}^{\vee} \; (i = 1, 2)$.    
From this, one obtains the formal power series in two variables: 
\begin{equation*} 
    \mu^{*}(x)  =   F^{MU}(x_{1}, x_{2})  = \sum_{i, j \geq 0}  a_{i, j}x_{1}^{i} x_{2}^{j} 
\quad (a_{i, j}  = a_{i, j}^{MU}   \in MU^{2(1 - i - j)}). 
\end{equation*} 
Therefore,  for    complex line bundles $L$ and  $M$  over the same 
base space,   the following formula holds:  
\begin{equation*} 
     c_{1}^{MU}(L  \otimes M)  =  F^{MU} (c_{1}^{MU}(L),  c_{1}^{MU}(M)).  
\end{equation*} 
Quillen \cite[\S 2]{Quillen1969} 
showed that the formal power series $F^{MU}(x_{1}, x_{2})$ 
is a formal group law  over $MU^{*}$.  
Moreover, he also showed that the formal group law $F^{MU} (x_{1}, x_{2})$ over $MU^{*}$ 
is  {\it universal}  in the sense that given any formal group law $F(x_{1}, x_{2})$ 
over a commutative ring $R$ with unit, there exists a unique ring homomorphism 
$\theta:   MU^{*}  \longrightarrow R$  carrying $F^{MU}$ to $F$. 
Topologically,  Quillen's result claims that the 
 complex cobordism theory $MU^{*}(-)$ is {\it universal} among  complex-oriented 
generalized cohomology theories.   It has been  known since Quillen that a  complex-oriented 
generalized cohomology theory gives rise to a formal group law in  the same manner 
as that above.  
For instance, the ordinary cohomology theory (with integer coefficients) $H^{*}(-)$ 
corresponds to the 
additive formal group law $F^{H}(x_{1}, x_{2}) = x_{1} + x_{2}$, 
and the (topological) complex $K$-theory   $K^{*}(-)$  corresponds to the multiplicative 
formal group law  $F^{K}(x_{1}, x_{2}) = x_{1} + x_{2} - \beta x_{1}x_{2}$
for some unit  $\beta \in K^{-2} = K^{-2}(\mathrm{pt})  =  \tilde{K}^{0}(S^{2})$. 
Here,  a comment concerning the $K$-theoretic Chern classes is in order: 
 Following Levine--Morel \cite[Example 1.1.5]{Levine-Morel2007},   
   for a complex line bundle $L \longrightarrow X$,  we define the $K$-theoretic first Chern class 
$c^{K}_{1}(L)$ to be $\beta^{-1} (1 - L^{\vee})  \in K^{2}(X)$.   Then, the 
corresponding formal group law is given as stated above.

\subsection{Lazard ring $\L$ and  universal formal group law $F_{\L}$}    \label{subsec:UFGL}   
Quillen's result  in the previous subsection 
implies that the formal group law $F^{MU}$ over $MU^{*}$ is
 identified with 
the so-called  {\it Lazard's universal formal group law}  $F_{\L}(u, v)$ 
  over the {\it Lazard ring  $\L$}, 
 which we briefly recall from   Levine--Morel's book \cite[Chapters 1 and 2]{Levine-Morel2007}. 
Accordingly,  the additive formal group law is denoted by $F_{a} (u, v) = u + v$ in 
place of $F^{H}$, 
and the multiplicative formal group law by $F_{m}(u, v) = u + v - \beta uv$  
in place of $F^{K}$  in the following.    
In \cite{Lazard1955},  Lazard constructed  the  universal 
formal group law
\begin{equation*} 
F_{\L} (u,v) =   u + v + \sum_{i,j \geq 1} a^{\L}_{i,j} u^{i}  v^{j}   \in    \L[[u,v]] 
\end{equation*} 
over  the ring $\L$, where  $\L$ is   the {\it Lazard ring}, and he showed that 
it  is isomorphic to the polynomial ring 
in a countably infinite number of variables with  integer coefficients.   
$F_{\L} (u, v)$  
is a formal power series in $u$,  $v$ with coefficients $a^{\L}_{i,j}  \in \L$
that  satisfies the axioms of the  formal group law: 
\begin{enumerate} 
\item [(i)]  $F_{\L} (u, 0) = u$, $F_{\L} (0, v) = v$, 
\item [(ii)] $F_{\L}(u, v) = F_{\L}  (v, u)$, 
\item [(iii)]  $F_{\L}(u, F_{\L} (v, w)) =  F_{\L} (F_{\L}(u, v), w)$. 
\end{enumerate} 
For the universal formal group law, we shall use the notation
\begin{equation*} 
\begin{array}{llll} 
  &  u  +_{\L}    v =  F_{\L}(u,  v)   \quad  & \text{(formal sum)}, \medskip \\ 
  &  \overline{u} =   [-1]_{\L} (u)  = \chi_{_{\L}}(u)  & \text{(formal  inverse of} \;   u),  \medskip  \\
  &   u -_{\L} v =  u +_{\L}  [-1]_{\L}(v) = u +_{\L}  \overline{v}   &  \text{(formal subtraction)}. 
\medskip  
\end{array}
\end{equation*}
Furthermore,     we define $[0]_{\L}(u) := 0$, and inductively,  
   $[n]_{\L}(u)  :=  [n-1]_{\L}(u) +_{\L} u$
for a positive integer  $n \geq 1$.  We also define   
 $[-n]_{\L}(u) := [n]_{\L}([-1]_{\L}(u))$ for $n \geq 1$.     
We call $[n]_{\L}(u)$ the {\it $n$-series} in the sequel
(we often drop $\L$ from the notation, and simply write $[n](u)$ for simplicity). 
Denote  the  {\it logarithm}  (see \cite[Lemma 4.1.29]{Levine-Morel2007})   
of  $F_{\L}$    by  $\ell_{\L}  (u)   \in \L \otimes \Q [[u]]$, i.e.,  
a unique formal power series with leading term $u$ 
such that   
\begin{equation*} 
\ell_{\L}(u +_{\L} v)  =   \ell_{\L} (u) + \ell_{\L} (v).   
\end{equation*} 
The Lazard ring  $\L$ can be    graded  by assigning   each coefficient 
$a^{\L}_{i, j}$  the degree  
$1 - i - j  \; (i, j \geq 1)$.  
This grading   makes $\L$ into the graded ring over the integers $\Z$. 
Be aware that in topology, it is customary to give   $a^{\L}_{i, j}$ the {\it cohomological} 
degree $2(1 - i - j)$.

For the complex $K$-theory, we shall use the following notation: 
\begin{equation*} 
\begin{array}{llll}  
     &  u \oplus v  =  F_{m}(u, v)  = u + v - \beta uv, \medskip \\
     &  \overline{u}  =  \dfrac{-u}{1 - \beta u}, \medskip \\
     &  u \ominus v  = u  \oplus  \overline{v}  =  \dfrac{u - v}{1 - \beta v}.  \medskip 
\end{array}  
\end{equation*}

\subsection{Universal Schur functions}    \label{subsec:USF}  
Throughout this paper, we use the notation  concerning partitions 
as in Macdonald's book \cite[Chapter I]{Macdonald1995}.   
A partition $\lambda$ is a non-increasing sequence $\lambda = (\lambda_{1}, \lambda_{2}, 
\ldots, \lambda_{k})$ of non-negative integers  such that $\lambda_{1} \geq \lambda_{2} \geq \cdots 
\geq \lambda_{k} \geq 0$.   As is customary, we do not distinguish between two 
such sequences that  differ only by a finite or infinite sequence of zeros at the end.  
The non-zero $\lambda_{i}$'s   are called  parts of $\lambda$, and the number 
of parts is the length of $\lambda$, denoted by $\ell (\lambda)$.  
The sum of the parts is the weight of $\lambda$, denoted by $|\lambda|$.  
If $|\lambda| = n$,   then we say  that $\lambda$ is a partition of $n$.  
If $\lambda$ and  $\mu$ are partitions,  then we  write $\lambda \subset \mu$ to 
mean that $\lambda_{i}  \leq \mu_{i}$ for all $i \geq 1$.   
In what follows, the set of all partitions  of length $\leq n$ is denoted 
by   $\mathcal{P}_{n}$. 
For a non-negative  integer $n$,   we set $\rho_{n} :=(n, n-1, \ldots, 2, 1)$ 
($\rho_{0}$ as understood to be the unique partition of $0$, 
which we denote by just  $0$ or $\emptyset$).  
The partition $(\underbrace{k, k, \ldots, k}_{l})$ is abbreviated as  $(k^{l})$, 
or just $k^{l}$. 
Let $\lambda, \mu \in \mathcal{P}_{n}$ be partitions. Then,   $\lambda + \mu$ 
is a partition  
defined by $(\lambda + \mu)_{i} := \lambda_{i} + \mu_{i} \; (i = 1, 2, \ldots, n)$. 
Given a partition $\lambda \in \mathcal{P}_{n}$ and a positive integer $k$, 
we denote the partition $(k\lambda_{1}, \ldots, k \lambda_{n})$ by $k \lambda$.

Now,  let us recall the definition of the {\it universal Schur functions} 
(see Nakagawa--Naruse \cite[Definition 4.10]{Nakagawa-Naruse2016}):  
   Let $\x_{n} = (x_{1}, x_{2}, \ldots, x_{n})$ be  $n$ independent  variables.  
For a partition $\lambda \in \mathcal{P}_{n}$,   
 we use the notation  
   $\bm{x}^{\lambda}  :=  \prod_{i=1}^{n}  x_{i}^{\lambda_{i}}$
($x_{i}^{0} := 1$).   Then,   the {\it universal Schur function}  $s^{\L}_{\lambda}(\x_{n})$ in the variables 
$\x_{n} = (x_{1}, x_{2}, \ldots, x_{n})$ corresponding to the partition 
$\lambda  \in \mathcal{P}_{n}$   is defined as 
\begin{equation}     \label{eqn:DefinitionUSF} 
    s^{\L}_{\lambda}(\x_{n}) :=   \sum_{w \in S_{n}}  w \cdot 
        \left [     \dfrac{\bm{x}^{\lambda + \rho_{n-1}}}{ \prod_{1 \leq i < j \leq n}  (x_{i} +_{\L} \overline{x}_{j})} 
         \right ], 
\end{equation} 
where the symmetric group $S_{n}$ acts on the variables 
$\x_{n} = (x_{1}, \ldots, x_{n})$ by permutations.  This  is a generalization of the 
usual Schur polynomial $s_{\lambda}(\x_{n})$  (see e.g., Macdonald \cite[Chapter I, (3.1)]{Macdonald1995}), 
and the {\it Grassmann  Grothendieck polynomial} 
$G_{\lambda}(\x_{n})$  (Buch \cite[\S 2]{Buch2002(Acta)}).   In fact, 
under the specialization  from  the universal formal group law  $F_{\L}(u, v) = u +_{\L} v$
 to the additive one, $F_{a}(u, v) =  u + v$ (resp. the multiplicative one, 
$F_{m}(u, v) =  u \oplus v$),    the function $s^{\L}_{\lambda}(\x_{n})$ is reduced to 
$s_{\lambda}(\x_{n})$ (resp.  $G_{\lambda}(\x_{n})$).  
Note that $s_{\lambda}(\x_{n})$ is a polynomial in $\x_{n}$ with integer coefficients, and 
$G_{\lambda}(\x_{n})$ is also a polynomial in $\x_{n}$ with coefficients in $\Z[\beta]$, 
whereas $s^{\L}_{\lambda}(\x_{n})$ is a formal power series in $\x_{n}$ with 
coefficients in $\L$.  Moreover, unlike the Schur and Grothendieck polynomials, 
the function $s^{\L}_{\emptyset} (\x_{n})$ corresponding to the empty partition 
$\emptyset = (0^{n})$ is not equal to $1$.  For example, we have 
\begin{equation*} 
   s^{\L}_{\emptyset} (\x_{2})  =  \dfrac{x_{1}}{x_{1} +_{\L}  \overline{x}_{2}} 
                                        +  \dfrac{x_{2}}{x_{2} +_{\L}  \overline{x}_{1}} 
                 =  1 + a_{1, 2}^{\L} x_{1} x_{2} +  \cdots   \neq 1.  
\end{equation*} 

In the later section (\S \ref{sec:Applications}), we need to extend 
the above definition (\ref{eqn:DefinitionUSF}) to arbitrary sequences of non-negative integers. 
For such a sequence $I = (I_{1}, \ldots, I_{n})  \in (\Z_{\geq 0})^{n}$, 
the corresponding universal Schur function $s^{\L}_{I}(\x_{n})$ is defined 
by the same expression as (\ref{eqn:DefinitionUSF}).

\subsection{Flag bundles associated with vector bundles}   \label{subsec:FlagBundles}  
Darondeau--Pragacz formulas describe  Gysin maps for flag bundles associated with 
complex vector bundles.   
In this subsection, we prepare the necessary notations  and terminologies 
concerning flag bundles used in this paper (for more details, readers are referred to 
Darondeau--Pragacz \cite[Sections  1, 2, and  3]{Darondeau-Pragacz2017},  
Edidin--Graham \cite[Section  6]{Edidin-Graham1995}, 
Fulton--Pragacz \cite[Section 6.1]{Fulton-Pragacz1998}). 
Let $E \longrightarrow X$ be a rank $N$ complex vector bundle of {\it type} $Y$, where 
$Y$ stands for $A$, $B$, $C$, or $D$.  The situations considered here are as follows: 
\begin{itemize} 
\item  Type $A$: $N = n$, and no conditions on $E$;

\item Type $C$: $N = 2n$, and $E$ is equipped with a non-degenerate {\it symplectic} form $\langle -, - \rangle$ 
        with values in a certain line bundle $L$;  

\item Type $BD$: $N = 2n + 1$ for type $B$, or $N = 2n$ for type $D$, and 
          $E$ is equipped with a non-degenerate {\it orthogonal} form $\langle -, - \rangle$ 
        with values in a certain line bundle $L$.  
\end{itemize}  
Let $0  <   q_{1} < q_{2} < \cdots < q_{m} \leq n$ be a sequence of integers. 
We denote by  $\varpi_{q_{1}, \ldots, q_{m}} =   \varpi_{q_{1}, q_{2}, \ldots, q_{m}}^{Y}:  
\F \ell^{Y}_{q_{1}, q_{2}, \ldots, q_{m}} (E)   \longrightarrow X$ the corresponding 
partial flag bundle of type $Y$. 
On $\F \ell^{Y}_{q_{1}, \ldots, q_{m}} (E)$,  there 
exists a universal  (isotropic for $Y = B, C$ or $D$)   flag of subbundles of $\varpi_{q_{1}, \ldots, q_{m}}^{*}(E) = E$, 
\begin{equation*}   
    0  \subset U_{q_{1}} \subset U_{q_{2}} \subset \cdots \subset 
      U_{q_m}  \subset E,  
\end{equation*} 
where $\rank \, U_{q_{i}} = q_{i}  \; (i = 1, \ldots, m)$ (throughout the paper, 
the subscripts of the bundles will 
denote the rank unless otherwise specified).  
 As a special case when $q_{k} = k \; (k = 1, \ldots, m)$, we  obtain the {\it full}   flag 
bundle  $\varpi_{1, 2, \ldots, m} = \varpi_{1, 2, \ldots, m}^{Y}: \F \ell^{Y}_{1, 2, \ldots, m}(E)  \longrightarrow
X$  of type $Y$. 
In particular,  the full flag bundle 
$\varpi^{Y} =  \varpi_{1, 2, \ldots, n}^{Y}:   \F \ell^{Y} (E) =   \F \ell^{Y}_{1, 2, \ldots, n}(E) \longrightarrow X$ 
is  the   {\it complete} flag bundle of type $Y$.\footnote{
We adopted the terminology used  in Darondeau--Pragacz \cite[\S 1.2]{Darondeau-Pragacz2017}. 
}    On $\F \ell^{Y}(E)$, we have the universal  (isotropic for $Y = B, C$, or $D$) flag of subbundles 
\begin{equation}   \label{eqn:UniversalFlagSubbundles}
     0 = U_{0}  \subset U_{1}  \subset \cdots \subset U_{n} \subset E,     
\end{equation} 
and we put 
\begin{equation*} 
        y_{i} :=   c_{1}^{MU}  ((U_{i}/U_{i-1})^{\vee})   \in MU^{2}(\F \ell^{Y} (E))  
 \quad (i  = 1, 2, \ldots,  n). 
\end{equation*} 
These  are the $MU^{*}$-theory Chern roots of $U_{n}^{\vee}$.                   
For $Y = B, C$, or $D$, the universal isotropic flag of subbundles (\ref{eqn:UniversalFlagSubbundles})
 can be extended to the ``ordinary'' universal flag of subbundles of $E$, 
\begin{equation*} 
     0 = U_{0}  \subset U_{1}  \subset \cdots \subset U_{n-1}  \subset U_{n}  \subset  U_{n}^{\perp}  
     \subset  U_{n-1}^{\perp} \subset  \cdots  \subset U_{2}^{\perp}  \subset U_{1}^{\perp}  \subset E.    
\end{equation*} 
Here $U_{i}^{\perp}$ denotes its {\it complement} with respect to the 
given symplectic or orthogonal form $\langle -,  - \rangle$.  
Then,  using an  isomorphism of vector bundles 
              $U_{i-1}^{\perp}/U_{i}^{\perp}  
          \overset{\sim}{\longrightarrow}   (U_{i}/U_{i-1})^{\vee}  \otimes L$
    induced from the given  symplectic or orthogonal form, 
  the whole of Chern roots of $E^{\vee}$ are given as follows: 
\begin{itemize} 
  \item    Type $C$ case:   $y_{1}, \ldots, y_{n},       \overline{y}_{1} +_{\L}   \overline{z}, \ldots, \overline{y}_{n} +_{\L}  \overline{z}$,

 \item   Type  $B$ case:   
         $y_{1}, \ldots, y_{n}, y_{n + 1},  \overline{y}_{1} +_{\L} \overline{z}, \ldots, \overline{y}_{n} +_{\L} \overline{z}$,

\item  Type  $D$  case:   
        $y_{1}, \ldots, y_{n},   \overline{y}_{1} +_{\L} \overline{z}, \ldots, \overline{y}_{n} +_{\L} \overline{z}$,   
\end{itemize} 
where we set   $y_{n + 1} := c^{MU}_{1}((U_{n}^{\perp}/U_{n})^{\vee})$  and   $z := c^{MU}_{1}(L)$.

For $Y = A$ or $C$, it is well-known  that the full flag bundle $\F \ell^{Y}_{1, 2, \ldots, q}(E)$  is constructed as 
a chain of projective bundles, 
and we apply the push-forward formula for   a projective bundle (see (\ref{eqn:FundamentalFormula(TypeA)}))  
repeatedly to this chain to obtain our Gysin formulas.  
For $Y = B$ or $D$,  the full flag bundle $\F \ell^{Y}_{1, \ldots, q}(E)$ is constructed as a chain of   
quadric bundles of  isotropic lines,  and    we apply the push-forward formula for   a quadric  bundle (see (\ref{eqn:FundamentalFormula(TypeBD)}))  
repeatedly to this chain to obtain our Gysin formulas.  
The quadric bundle of isotropic lines associated with 
an orthogonal bundle $E  \longrightarrow X$ is  denoted by 
$\rho_{1}:    Q(E)  \longrightarrow X$ in the sequel. 
Let $\iota: Q(E)  \hooklongrightarrow P(E)$  be the natural inclusion.  
 Then,  we have  $\rho_{1}  =   \varpi_{1} \circ \iota$, 
where $\varpi_{1}:  P(E)  \longrightarrow X$ is the associated projective bundle 
of lines.

\section{Darondeau--Pragacz formulas in complex cobordism}  \label{sec:D-PFormulas(ComplexCobordism)}   
In this section, we  generalize the push-forward (Gysin)  formulas for flag bundles
due to Darondeau--Pragacz \cite{Darondeau-Pragacz2017} to the complex cobordism 
theory.    Their formulas will be referred to as the Darondeau--Pragacz formulas
in the sequel.       
The original Darondeau--Pragacz formulas 
were formulated in the Chow theory, or the ordinary cohomology theory. 
Their formulas are obtained by iterating the classical push-forward formula for a projective 
bundle associated with a complex vector bundle, and 
are  expressed by using the Segre classes,  
or  the Segre polynomials  of complex 
vector bundles.   Therefore,  to generalize their formulas to the complex 
cobordism theory,  we need to generalize   the notion of  the Segre classes
and the classical push-forward formula for a 
projective bundle  in  cohomology   to  complex cobordism.     These 
generalizations were essentially established by Hudson--Matsumura 
\cite{Hudson-Matsumura2019} 
and Quillen \cite{Quillen1969} respectively, and are recalled in the next subsection.

\subsection{From cohomology to complex cobordism} 
\subsubsection{Segre classes in complex cobordism}    \label{subsubsec:SegreClasses(ComplexCobordism)}  
 To formulate  the Darondeau--Pragacz formulas in complex cobordism,  
  we need a notion of  the Segre classes  of 
complex vector bundles in complex cobordism. 
 Fortunately such  a notion was  recently  introduced by Hudson--Matsumura 
\cite[Definition 4.3]{Hudson-Matsumura2019}. 
 More precisely, they defined  the Segre classes $\mathscr{S}_{m}(E) \; (m \in \Z)$ 
 in the algebraic cobordism theory $\Omega^{*}(-)$ 
  of  a complex vector bundle $E$    using the push-forward image 
of the projective bundle $G^{1}(E) \cong P(E^{\vee})$.  Notice that 
this definition is the  exact analogue  of the Segre classes in ordinary cohomology 
  given by Fulton \cite[\S 3.1]{Fulton1998}.      
$K$-theoretic generalization of Segre classes $G_{m}(E) \; (m \in \Z)$ 
 is also introduced in the same manner 
(Buch \cite[Lemma 7.1]{Buch2002(Duke)}).     
 The Segre classes  of  $E$ in the   complex 
cobordism theory $MU^{*}(-)$, denoted by   $\mathscr{S}^{\L}_{m}(E) \; (m \in   \Z)$, 
can be defined in exactly the same manner as above. 
Denote 
the generating function of the Segre classes, which we call the {\it Segre series}  
in complex cobordism, 
by
\begin{equation*} 
   \mathscr{S}^{\L} (E; u)  :=  \sum_{m \in \Z} \mathscr{S}^{\L}_{m}(E)  u^{m}.  
\end{equation*}   
More explicitly, the following expression has been obtained 
(see \cite[Theorem 4.6]{Hudson-Matsumura2019}):    
Let   $x_{1}, \ldots, x_{n}$ be  the $MU^{*}$-theory Chern roots of $E$.  
Then,  the Segre series $\mathscr{S}^{\L}(E; u)$  of $E$ is given by 
\begin{equation}   \label{eqn:SegreSeries(ComplexCobordism)}  
     \mathscr{S}^{\L}(E; u)  
        = \left.    \dfrac{1}{\mathscr{P}^{\L} (z)}   
                             \prod_{j=1}^{n}  \dfrac{z}{ z +_{\L}  \overline{x}_{j}}   
                                       \right |_{z = u^{-1}} 
                 =   \left.    \dfrac{1}{\mathscr{P}^{\L} (z)}   
                                   \dfrac{z^{n}}{\prod_{j=1}^{n} (z +_{\L}  \overline{x}_{j}) }   
                                       \right |_{z = u^{-1}} ,    
\end{equation}
where  $\mathscr{P}^{\L}(z) :=  1 + \sum_{i=1}^{\infty} a^{\L}_{i, 1} z^{i}$.

These Segre classes $\mathscr{S}^{\L}_{m}(E) \; (m \in \Z)$ are  natural 
generalizations of the (ordinary) Segre classes in cohomology.   
If we specialize the universal formal group law $F_{\L} (u, v) = u +_{\L} v$ to 
$F_{a}(u, v) = u + v$, then $\mathscr{S}^{\L}(E; u)$ reduces 
to 
\begin{equation*} 
        \left.    \prod_{j=1}^{n}   \dfrac{z}{z - x_{j}} \right |_{z = u^{-1}}   
   =   \prod_{j=1}^{n}  \dfrac{1}{1 - x_{j} u}   = \dfrac{1}{c(E; -u)}   
   =  \dfrac{1}{c(E^{\vee}; u)}  = s(E; u).  
\end{equation*} 
Here,  $c (E; u) := \sum_{i = 0}^{n} c_{i}(E) u^{i}$ (resp.  $s(E; u)  = \sum_{i \geq 0} s_{i}(E) u^{i}$) 
 denotes  the (ordinary)  Chern polynomial (resp.  Segre series)  
 of $E$.    
  This formula also implies 
that the $i$-th Segre class $s_{i}(E)$ is identified with the $i$-th 
complete symmetric polynomial $h_{i}(\x_{n})$, 
or  the Schur polynomial $s_{(i)}(\x_{n})$ corresponding to the one-row $(i)$
 in the variables  $\x_{n} = (x_{1}, \ldots, x_{n})$. 
The classes $\mathscr{S}^{\L}_{m}(E)$ 
 are also  generalizations of the $K$-theoretic Segre classes 
$G_{m}(E) \; (m \in \Z)$.   In fact, 
  if we specialize $F_{\L}(u, v) = u +_{\L} v$ to  $F_{m}(u, v) = u \oplus v$, then 
$\mathscr{S}^{\L} (E; u)$ reduces to 
\begin{equation}    \label{eqn:K-theoreticSegreSeries}  
  \left.    \dfrac{1}{1 - \beta z}   \prod_{j=1}^{n}  \dfrac{z}{z \ominus x_{j}} \right |_{z = u^{-1}}   
   =  \dfrac{1}{1 - \beta u^{-1}} \prod_{j=1}^{n}  \dfrac{1 - \beta x_{j}}  
                                                                        { 1 - x_{j} u} 
   =  \dfrac{1}{1 - \beta u^{-1}}   \dfrac{c^{K} (E; -\beta)}
                                                     {c^{K}  (E; -u)}, 
\end{equation} 
which is the $K$-theoretic Segre series 
$G(E; u) = \sum_{m \in \Z}  G_{m} (E) u^{m}$  of $E$ 
given by 
Hudson--Ikeda--Matsumura--Naruse \cite[Theorem 2.8]{HIMN2017}.    
Here, $c^{K}(E; u) =  \sum_{i = 0}^{n} c^{K}_{i}(E) u^{i}$ is the 
$K$-theoretic Chern polynomial of $E$.   For $m \geq 1$, the $m$-th
$K$-theoretic Segre class $G_{m}(E)$ is identified with the $m$-th 
 Grothendieck polynomial $G_{m}(\x_{n})$ corresponding to the 
one-row $(m)$.    We remark that 
our Segre class $\mathscr{S}^{\L}_{m}(E)  \; (m \geq 1)$
 in complex cobordism 
can be identified with the {\it new universal Schur function} 
$\mathbb{S}^{\L}_{m} (\x_{n})$ 
(see  Nakagawa--Naruse 
\cite[Remark 5.10]{Nakagawa-Naruse2018}).

\begin{rem}   \label{rem:GeometricMeaningP^L(z)} 
The formal power series $\mathscr{P}^{\L} (z)  \in \L [[z]]$ has the following geometric 
meaning$:$   By the argument in Quillen \cite[\S 1]{Quillen1969}, 
we have 
$\dfrac{\partial F_{\L}}{\partial v}(z,  0)  =   \mathscr{P}^{\L} (z)$, 
and hence $\ell_{\L}' (z)  = \dfrac{1}{\mathscr{P}^{\L} (z)}$.   
As a result of Mi\v{s}\v{c}enko $($see Adams {\rm \cite[Chapter II, Corollary 9.2]{Adams1974}}$)$,  
we know that 
\begin{equation*} 
     \ell_{\L} (z)  =  \sum_{m =0}^{\infty}  \dfrac{[\C P^{m}]}{m + 1} z^{m+1}, 
\end{equation*} 
 where  $[ \C P^{m}]  \in MU^{-2m}  = \L^{-2m}$   
is  the cobordism class of $\C P^{m}$.   Therefore,   we have 
\begin{equation*} 
       \dfrac{1}{\mathscr{P}^{\L}(z)}    = \sum_{m = 0}^{\infty} [\C P^{m}]  z^{m}.  
\end{equation*} 
\end{rem}  

\subsubsection{Fundamental Gysin formula for   projective bundle in complex cobordism}  
\label{subsubsec:FundamentalGysinFormulaProjectiveBundle(ComplexCobordism)}  
Let $E \longrightarrow X$ be a complex vector bundle of rank $n$,  and 
$\varpi_{1}:  P(E)  \longrightarrow X$ the associated projective 
bundle of lines in $E$.  Denote the tautological line bundle on $P(E)$ by $U_{1}$.  
Put $y_{1} := c_{1}^{MU}(U_{1}^{\vee})  \in MU^{2}(P(E))$.  
In \cite{Quillen1969},  Quillen described the Gysin map 
$\varpi_{1 *}:  MU^{*}(P(E))  
\longrightarrow MU^{*}(X)$.   In our notation, his formula is 
stated as follows:  
\begin{theorem} [Quillen \cite{Quillen1969}, Theorem 1]  
For a polynomial $f(t)   \in MU^{*}(X)[t]$,  
  the Gysin map $\varpi_{1 *}:  MU^{*}(P(E))  
\longrightarrow MU^{*}(X)$ is given by the {\it residue formula}   
\begin{equation}     \label{eqn:QuillenResidueFormula}    
    \varpi_{1 *}  (f(y_{1}))    =   \underset{t  = 0}{\Res'} \,  
                                                 \dfrac{ f(t) 
                                                          }  
                                             { \mathscr{P}^{\L}(t)  
                                                \prod_{i=1}^{n}  (t  +_{\L}  \overline{y}_{i})  
                                            }, 
\end{equation}  
where $y_{1},  \ldots, y_{n}$ denote the $MU^{*}$-theory Chern roots 
of $E^{\vee}$.  
\end{theorem}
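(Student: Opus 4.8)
The plan is to deduce Quillen's residue formula from the known structure of $MU^{*}(P(E))$ as a module over $MU^{*}(X)$, together with the characterization of the Gysin map by the projection formula and a normalization on the top class. Recall first the Leray--Hirsch type description: $MU^{*}(P(E))$ is a free $MU^{*}(X)$-module with basis $1, y_{1}, y_{1}^{2}, \ldots, y_{1}^{n-1}$, and the single relation is $\prod_{i=1}^{n}\bigl(y_{1} +_{\L} \overline{y}_{i}\bigr) = 0$, where $y_{1}, \ldots, y_{n}$ are the $MU^{*}$-theory Chern roots of $E^{\vee}$ (this is the formal-group-law version of the classical relation $c_{n}^{MU}(E^{\vee}\otimes U_{1}^{\vee}) = 0$, since $U_{1}\hookrightarrow \varpi_1^{*}E$ forces the Euler class of the quotient bundle $\varpi_1^{*}E^{\vee}\otimes U_1^{\vee}$ to vanish, and its Chern roots are the $y_1 +_{\L} \overline{y}_i$). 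So it suffices to check that the right-hand side of \eqref{eqn:QuillenResidueFormula}, viewed as an $MU^{*}(X)$-linear operator in $f$, agrees with $\varpi_{1*}$ on this basis.

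The key steps, in order, would be: (1) observe both sides of \eqref{eqn:QuillenResidueFormula} are $MU^{*}(X)$-linear in $f$, so it is enough to verify the formula for $f(t) = t^{k}$ with $0 \le k \le n-1$ (and then note that the relation is respected by the residue operator, so the value on higher powers is forced to be consistent); (2) recall the dimension count: $\varpi_{1}$ has relative dimension $n-1$, so $\varpi_{1*}(y_{1}^{k}) = 0$ for $k < n-1$ and $\varpi_{1*}(y_{1}^{n-1}) = 1$, which is the classical normalization (the fibre $\C P^{n-1}$ with its hyperplane class); (3) check the matching residue computation. Here one expands
\[
\frac{1}{\mathscr{P}^{\L}(t)\prod_{i=1}^{n}(t +_{\L} \overline{y}_{i})}
= \frac{1}{t^{n}}\cdot\frac{1}{\mathscr{P}^{\L}(t)}\prod_{i=1}^{n}\frac{t}{t +_{\L} \overline{y}_{i}},
\]
and the second factor is a power series in $t$ with constant term $1$ (since $t/(t +_{\L}\overline{y}_i) \to 1$ as the $y_i \to 0$, and more precisely has the form $1 + (\text{terms involving the } y_i)t + \cdots$). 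Multiplying by $t^{k}$ and extracting the coefficient of $t^{-1}$ (which is what $\Res'_{t=0}$ does) gives $0$ for $k < n-1$ and the constant term $1$ for $k = n-1$, matching step (2); (4) finally, invoke Quillen's computation of $\mathscr{P}^{\L}(t) = \partial F_{\L}/\partial v(t,0) = 1/\ell_{\L}'(t)$ (as recalled in Remark \ref{rem:GeometricMeaningP^L(z)}) to see that the denominator factor $\mathscr{P}^{\L}(t)$ is precisely the normalization that makes the geometric pushforward (built from the cobordism classes $[\C P^{n}]$) come out right, rather than just the naive algebraic one.

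The main obstacle I expect is step (4): pinning down that the specific power series $\mathscr{P}^{\L}(t)$ appearing in Quillen's formula is the correct one, as opposed to $1$ or some other unit. In ordinary cohomology the analogous denominator is trivial and the residue formula is immediate from the relation $\prod(y_1 - y_i) = 0$; the whole content in the cobordism setting is that pushing forward along $P(E) \to X$ introduces the class of the fibre $\C P^{n-1}$ in a way encoded by the formal group law, and this is exactly what Quillen established in \cite[\S 1--2]{Quillen1969}. So rather than reprove that, the honest statement is that one reduces \eqref{eqn:QuillenResidueFormula} to Quillen's original computation of the Gysin map on $\C P^{n-1}$-bundles via the universal example $P(\eta) \to BU(n)$ (or, equivalently, the splitting-principle reduction to a product of line bundles, where $\mathscr{P}^{\L}$ enters through the self-intersection formula for $\C P^{1} = P(\text{line} \oplus \text{line})$). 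A secondary technical point is making the residue symbol $\Res'_{t=0}$ well-defined on the relevant completed ring $MU^{*}(X)[[t]][t^{-1}]$-type object; this is the ambiguity acknowledged in the acknowledgments, and is handled in \S\ref{subsec:QuillenResidueFormula}, so here one simply cites that the symbol extracts the coefficient of $t^{-1}$ after clearing the $\prod(t +_{\L}\overline{y}_i)$ into $t^{n}$ times a unit.
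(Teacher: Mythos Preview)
The paper does not give its own proof of this theorem; it is quoted directly from Quillen \cite[Theorem 1]{Quillen1969}, and the surrounding text only explains the convention for interpreting the residue symbol $\mathrm{Res}'_{t=0}$ (with a detailed expansion in the Appendix). So there is no paper proof to compare against.

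That said, your proposal contains a genuine error. Step (2) is false in complex cobordism: it is \emph{not} the case that $\varpi_{1*}(y_{1}^{k}) = 0$ for $k < n-1$. The ``dimension count'' argument you invoke works in ordinary cohomology because $H^{<0}(X) = 0$, but $MU^{*}$ has nontrivial coefficients in all negative even degrees. Concretely, if $E$ is the trivial rank-$n$ bundle over a point, then $\varpi_{1*}(1) = [\C P^{n-1}] \in MU^{-2(n-1)}$, which is nonzero for $n \geq 2$; more generally $\varpi_{1*}(y_{1}^{k}) = [\C P^{n-1-k}]$ in that case. Step (3) fails for the same reason: the factor $1/\mathscr{P}^{\L}(t) = \sum_{m \geq 0} [\C P^{m}]\,t^{m}$ is a genuine power series in $t$ (not constant), and it is precisely this factor that produces the nonzero values of the residue for $k < n-1$. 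Your claim that ``the second factor is a power series in $t$ with constant term $1$'' and hence the residue vanishes for $k < n-1$ is therefore incorrect --- the residue equals $[\C P^{n-1-k}]$ plus correction terms in the Chern roots, exactly matching the Gysin image.

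So the verification strategy of checking that both sides vanish on $1, y_{1}, \ldots, y_{1}^{n-2}$ and equal $1$ on $y_{1}^{n-1}$ does not work: neither side vanishes. What is actually needed is an independent computation of $\varpi_{1*}(y_{1}^{k})$ for all $k$, and that computation \emph{is} Quillen's theorem. You correctly sense in step (4) that the content lies in Quillen's work, but the role of $\mathscr{P}^{\L}(t)$ is not a final normalization on top of an otherwise-established formula --- it is what encodes the classes $[\C P^{m}]$ that appear already in $\varpi_{1*}(1), \varpi_{1*}(y_{1}), \ldots$, and without it there is nothing to verify.
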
 
 Here,   the symbol $\underset{t  = 0}{\Res'} \,   F(t)$ is understood to be 
the coefficient of 
$t^{-1}$ in the {\it formal} Laurent series $F(t)$.  
However, we  must be careful when 
we apply the operation $\underset{t=0}{\Res'}$ to the formal Laurent series. 
For example, let us  consider the rational function $f(t) = 1/(1 - t)$.  
Then, on the one hand,  we have 
 $f(t) = 1 + t + t^2 + \cdots$ when expanded as a formal power 
series in $t$, and therefore $\underset{t = 0}{\Res'}f(t) = 0$.  
On the other hand, one can expand $f(t)$ as a formal power series in $t^{-1}$
so that $f(t) =  -t^{-1} - t^{-2} - \cdots$.  Therefore,  we have  $\underset{t = 0}{\Res'}f(t) 
= -1$.   Thus, we must specify  how to expand $f(t)$ as a formal power series in $t$ or $t^{-1}$ 
when we apply $\underset{t=0}{\Res'}$ to the rational function $f(t)$. 
In the above formula (\ref{eqn:QuillenResidueFormula}),  we expand the 
rational function of  the right-hand side in accordance with the 
following convention (for this interpretation of Quillen's result, see also  Naruse \cite[Lemma 4]{Naruse2018}): 
As mentioned in Remark \ref{rem:GeometricMeaningP^L(z)},  
we always treat $1/\mathscr{P}^{\L}(t)$ as a formal power series in $t$, 
that is, $1/\mathscr{P}^{\L}(t)  =  \sum_{m=0}^{\infty}  [\C P^{m}]  t^{m}$.  
For the product $1/\prod_{i=1}^{n}(t +_{\L} \overline{y}_{i})$,   
we expand this as a formal power series in $t^{-1}$ by using the 
following expansion: 
\begin{equation*} 
       \dfrac{1}{t +_{\L}  \overline{y}_{i}}    =   \dfrac{\mathscr{P}^{\L}  (t, y_{i}) }{t - y_{i}}  
   =  t^{-1}  \times \mathscr{P}^{\L} (t, y_{i})  \times \dfrac{1}{1 - y_{i}t^{-1}} 
   =  t^{-1}  \times \mathscr{P}^{\L}(t, y_{i})  \times 
        \sum_{k=0}^{\infty}  y_{i}^{k}  t^{-k},   
\end{equation*} 
where  $\mathscr{P}^{\L}(t, y_{i}) :=  \dfrac{t - y_{i}}{t +_{\L} \overline{y}_{i}}$.  
For further calculations, readers are referred to Appendix  \ref{subsec:QuillenResidueFormula}.

Following Darondeau--Pragacz \cite[p.2, (2)]{Darondeau-Pragacz2017}, 
we reformulate  the above formula in a more convenient form.   To do so, 
we use the same notation as  in \cite{Darondeau-Pragacz2017}.  
For a monomial $m$ of a Laurent polynomial $F$, we denote the coefficient of $m$ in $F$ by $[m](F)$.  
With these conventions and    the Segre series (\ref{eqn:SegreSeries(ComplexCobordism)}),  
the residue formula (\ref{eqn:QuillenResidueFormula}) becomes 
\begin{equation}    \label{eqn:FundamentalFormula(TypeA)}  
    \varpi_{1 *} (f(y_{1}))  
                      = [t^{-1}]  \left (  f(t)  \cdot    t^{-n}  \mathscr{S}^{\L} (E^{\vee}; 1/t) 
                                                  \right )  \medskip  \\ 
                               =   [t^{n-1}]  (f(t)  \mathscr{S}^{\L}(E^{\vee}; 1/t)).    
\end{equation}  
This is the fundamental formula for establishing more general Gysin 
formulas for   general flag bundles.

\subsection{Darondeau--Pragacz formula of type $A$ in complex cobordism}

With the above preliminaries, we can extend  
Darondeau--Pragacz formula \cite[Theorem 1.1]{Darondeau-Pragacz2017}
in the following manner:  Let $E  \longrightarrow X$ be a complex vector bundle 
of rank $n$.   Given a sequence of integers $q_{0} = 0 < q_{1} < \cdots < q_{m}  \leq  n  = q_{m + 1}$,   
we set $q := q_{m}$.  
Then,  the following Gysin formula holds for the partial flag bundle 
$\varpi_{q_{1}, \ldots, q_{m-1}, q} =\varpi^{A}_{q_{1}, \ldots, q_{m-1}, q} :  \F \ell_{q_{1}, \ldots, q_{m-1}, q}(E)  
  =  \F \ell^{A}_{q_{1}, \ldots, q_{m-1}, q}(E) \longrightarrow X$.  
\begin{theorem}  [Darondeau--Pragacz formula  of type $A$ in complex cobordism]  
\label{thm:TypeAD-PFormula(ComplexCobordism)}    
For a polynomial
 $f(t_{1}, \ldots, t_{q})  \in MU^{*}(X)[t_{1}, \ldots, t_{q}]^{S_{q_{1}} \times S_{q_{2} - q_{1}} \times \cdots \times S_{q - q_{m-1}}}$,  
one has 
\begin{equation}   \label{eqn:TypeAD-PFormula(ComplexCobordism)} 
\begin{array}{llll}   
   &  \varpi_{q_{1}, \ldots, q_{m-1}, q *}  (f(y_{1}, \ldots, y_{q}))   \medskip \\
    &   =   \left [ \displaystyle{\prod_{k=1}^{m}}  
                   \prod_{q_{k-1} < i \leq q_{k}}  t_{i}^{(n-1) - (q_{k}-i)}   \right ]  
   \left (  
               f(t_{1}, \ldots, t_{q})  \times \displaystyle{\prod_{k=1}^{m}}  
               s^{\L}_{\emptyset} (t_{q_{k-1} + 1},  \ldots, t_{q_{k}})^{-1} 
             \prod_{1 \leq i < j \leq q}  (t_{j} +_{\L}  \overline{t}_{i})  
   \right.   \medskip \\
    &     \hspace{11.2cm}   \left.   \times    \;         \displaystyle{\prod_{i=1}^{q}}  \mathscr{S}^{\L} (E^{\vee};  1/t_{i}) 
  \right ).    \medskip 
\end{array}  
\end{equation} 
\end{theorem}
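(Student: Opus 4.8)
The plan is to follow the strategy of Darondeau--Pragacz: first establish the formula for full flag bundles by iterating the fundamental projective-bundle formula \eqref{eqn:FundamentalFormula(TypeA)}, and then descend to arbitrary partial flag bundles by the argument of Damon.

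\textbf{Step 1 (full flag bundles).} Here all steps have size $1$, i.e. $q_k=k$ and $q=m$; then each $s^{\L}_{\emptyset}$ in the statement is a function of one variable and equals $1$, so the claim reduces to a clean formula for $\varpi_{1,\ldots,q\,*}$. I would realize $\F\ell_{1,\ldots,q}(E)=P(E/U_{q-1})$ as the chain of projective bundles $\varpi_i\colon P(E/U_{i-1})\to P(E/U_{i-2})$ of \S\ref{subsubsec:ConstructionFullFlagBundle(TypeA)}, with $y_i=c_1^{MU}((U_i/U_{i-1})^{\vee})$ the tautological root introduced at the $i$-th stage. Since $\varpi_i$ is the projective bundle of lines in the rank-$(n-i+1)$ bundle $E/U_{i-1}$, formula \eqref{eqn:FundamentalFormula(TypeA)} reads $\varpi_{i\,*}(h(\ldots,y_i))=[t_i^{\,n-i}]\bigl(h(\ldots,t_i)\,\mathscr{S}^{\L}((E/U_{i-1})^{\vee};1/t_i)\bigr)$, and from \eqref{eqn:SegreSeries(ComplexCobordism)}, using that the $MU^{*}$-theory Chern roots of $E^{\vee}$ are $y_1,\ldots,y_{i-1}$ together with those of $(E/U_{i-1})^{\vee}$, one gets the key substitution
\[
\mathscr{S}^{\L}\bigl((E/U_{i-1})^{\vee};1/t_i\bigr)=\mathscr{S}^{\L}(E^{\vee};1/t_i)\cdot t_i^{-(i-1)}\prod_{j=1}^{i-1}(t_i+_{\L}\overline{y}_j).
\]
Applying $\varpi_{q\,*},\varpi_{q-1\,*},\ldots,\varpi_{1\,*}$ in turn to $f(y_1,\ldots,y_q)$ (at the $i$-th stage $y_i$ is replaced by $t_i$, the above factor is multiplied in, and $[t_i^{\,n-i}]$ is taken), the powers $t_i^{-(i-1)}$ merge with the operators $[t_i^{\,n-i}]$ into a uniform $[t_i^{\,n-1}]$, the partial products $\prod_{j<i}(t_i+_{\L}\overline{t}_j)$ assemble into $\prod_{1\le i<j\le q}(t_j+_{\L}\overline{t}_i)$, and one obtains
\[
\varpi_{1,\ldots,q\,*}(f)=\Bigl[\textstyle\prod_{i=1}^{q}t_i^{\,n-1}\Bigr]\Bigl(f(t_1,\ldots,t_q)\prod_{1\le i<j\le q}(t_j+_{\L}\overline{t}_i)\prod_{i=1}^{q}\mathscr{S}^{\L}(E^{\vee};1/t_i)\Bigr),
\]
which is \eqref{eqn:TypeAD-PFormula(ComplexCobordism)} in this case. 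The only real care needed is the bookkeeping of exponents and of the nested formal-group products.

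\textbf{Step 2 (descent to partial flag bundles).} For general $q_1<\cdots<q_m=q$, put $W_k:=U_{q_k}/U_{q_{k-1}}$ (rank $r_k:=q_k-q_{k-1}$) on $\F\ell_{q_1,\ldots,q_m}(E)$, and let $\pi\colon\F\ell_{1,\ldots,q}(E)\to\F\ell_{q_1,\ldots,q_m}(E)$ be the forgetful map, so $\varpi_{1,\ldots,q}=\varpi_{q_1,\ldots,q_m}\circ\pi$. Fibrewise $\pi$ is the iterated fibre product of the complete flag bundles $\F\ell(W_k)$, so $\pi$-pushforward of a product of classes pulled back from the individual $\F\ell(W_k)$'s factors as the product of the corresponding pushforwards. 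Since $f$ is $S_{q_1}\times S_{q_2-q_1}\times\cdots\times S_{q-q_{m-1}}$-invariant, it is pulled back along $\pi$ from a unique class $g$ on $\F\ell_{q_1,\ldots,q_m}(E)$, i.e. $f(y_1,\ldots,y_q)=\pi^{*}g$. The core of the argument is the lemma that replaces the triviality ``$\pi_{*}1=1$'' of ordinary cohomology: for the complete flag bundle $p\colon\F\ell(W)\to Y$ of a rank-$r$ bundle $W$ with Chern roots $z_1,\ldots,z_r$ of $W^{\vee}$, one has $p_{*}\bigl(z_1^{\,r-1}z_2^{\,r-2}\cdots z_r^{\,0}\bigr)=s^{\L}_{\emptyset}(W^{\vee})$, the right-hand side being the characteristic class obtained by substituting the Chern roots of $W^{\vee}$ into the symmetric function $s^{\L}_{\emptyset}$; this follows from Step 1 applied with $f=\bm{x}^{\rho_{r-1}}$ together with definition \eqref{eqn:DefinitionUSF}, and is in substance \cite{Nakagawa-Naruse2016}. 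Granting it, the class
\[
\Delta:=\prod_{k=1}^{m}\Bigl(s^{\L}_{\emptyset}(y_{q_{k-1}+1},\ldots,y_{q_k})^{-1}\prod_{q_{k-1}<i\le q_k}y_i^{\,q_k-i}\Bigr)
\]
on $\F\ell_{1,\ldots,q}(E)$ satisfies $\pi_{*}\Delta=1$: each factor $s^{\L}_{\emptyset}(y_{q_{k-1}+1},\ldots,y_{q_k})$ is symmetric in its variables, hence pulled back from $\F\ell_{q_1,\ldots,q_m}(E)$, so the projection formula and the lemma give $(\pi_k)_{*}$ of the $k$-th factor equal to $s^{\L}_{\emptyset}(W_k^{\vee})^{-1}\cdot s^{\L}_{\emptyset}(W_k^{\vee})=1$ (the class $s^{\L}_{\emptyset}(W_k^{\vee})=1+\cdots$ being invertible). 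Therefore by the projection formula $\varpi_{q_1,\ldots,q_m\,*}(g)=\varpi_{q_1,\ldots,q_m\,*}(g\cdot\pi_{*}\Delta)=\varpi_{1,\ldots,q\,*}\bigl(f(y_1,\ldots,y_q)\cdot\Delta\bigr)$, and I would finish by inserting this into the full flag formula of Step 1: the staircase monomials $\prod_{q_{k-1}<i\le q_k}t_i^{\,q_k-i}$ lower the exponent of each $t_i$ from $n-1$ to $(n-1)-(q_k-i)$, the factors $s^{\L}_{\emptyset}(\ldots)^{-1}$ appear verbatim, while $\prod_{1\le i<j\le q}(t_j+_{\L}\overline{t}_i)$ and $\prod_{i}\mathscr{S}^{\L}(E^{\vee};1/t_i)$ are untouched, giving exactly \eqref{eqn:TypeAD-PFormula(ComplexCobordism)}.

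\textbf{The main obstacle.} Step 1 is elementary but demands careful exponent bookkeeping. The genuinely new point, compared with the cohomological case of \cite{Darondeau-Pragacz2017}, is the lemma identifying the fibre pushforward $p_{*}(\bm{x}^{\rho_{r-1}})$ with the nontrivial characteristic class $s^{\L}_{\emptyset}(W^{\vee})$ --- equivalently, producing the correcting class $\Delta$ with $\pi_{*}\Delta=1$. In ordinary cohomology $s_{\emptyset}=1$ and $\Delta$ is just the product of staircase monomials; the appearance of $s^{\L}_{\emptyset}$ here is precisely the phenomenon flagged in the introduction, and placing it correctly inside the final coefficient extraction is the delicate part of the proof.
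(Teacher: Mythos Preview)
Your proposal is correct and follows essentially the same approach as the paper: Step~1 is the paper's induction on $q$ via the fundamental formula \eqref{eqn:FundamentalFormula(TypeA)} and the Segre-series factorisation, and Step~2 is the paper's Damon-style descent through the fibre product $\mathcal{Y}\cong\F\ell_{1,\ldots,q}(E)$, with your class $\Delta$ and the identity $\pi_{*}\Delta=1$ being exactly the paper's formula \eqref{eqn:varpi_F(-)=1}. The only cosmetic difference is that you derive the key identity $p_{*}(\bm{x}^{\rho_{r-1}})=s^{\L}_{\emptyset}(W^{\vee})$ from Step~1 and the definition \eqref{eqn:DefinitionUSF}, whereas the paper cites it as \eqref{eqn:varpi_*(y^rho_n-1)} from prior work.
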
  
Before starting the proof, we recall the following fact concerning the 
{\it universal Schur class} of a vector bundle: 
For the Gysin map:  $\varpi_{*}:  MU^{*}(\F \ell (E)) \longrightarrow   MU^{*}(X)$, 
the following formula holds (see Nakagawa--Naruse \cite[Corollary 4.8]{Nakagawa-Naruse2018}): 
\begin{equation*} 
   \varpi_{*}  (\bm{y}^{\lambda + \rho_{n-1}})   = s^{\L}_{\lambda}(E^{\vee}),  
\end{equation*} 
where $s^{\L}_{\lambda}(E^{\vee})  \in MU^{2|\lambda|} (X)$ is a cohomology
class defined by $\varpi^{*} (s^{\L}_{\lambda}(E^{\vee})) =  s^{\L}_{\lambda}(\bm{y}_{n})$.
Hereafter,  we  often  identify  $s^{\L}_{\lambda}(E^{\vee})$ with $s^{\L}_{\lambda}(\bm{y}_{n})$  via 
 monomorphism $\varpi^{*}:  MU^{*}(X) \longrightarrow MU^{*}(\F \ell (E))$, and write 
$s^{\L}_{\lambda}(E^{\vee}) = s^{\L}_{\lambda}(\bm{y}_{n})$.   
As a particular case, for $\lambda = \emptyset$, 
we have 
\begin{equation}  \label{eqn:varpi_*(y^rho_n-1)}  
    \varpi_{*}  (\bm{y}^{\rho_{n-1}})  = s^{\L}_{\emptyset}(\bm{y}_{n}).   
\end{equation} 
Since we know that 
$s^{\L}_{\emptyset} (\bm{y}_{n})  =  (1 +  \text{higher terms in}  \; y_{1}, \ldots, y_{n})   \in MU^{0}(X)$ 
is an invertible element,  we deduce that 
\begin{equation*}   \label{eqn:varpi_*(-)=1}  
   \varpi_{*}   (s^{\L}_{\emptyset}(\bm{y}_{n})^{-1}  \bm{y}^{\rho_{n-1}}) = 1. 
\end{equation*}

\begin{proof}[Proof of Theorem $\ref{thm:TypeAD-PFormula(ComplexCobordism)}$]
One can prove the theorem along the same lines as in 
Darondeau--Pragacz \cite[Theorem 1.1]{Darondeau-Pragacz2017}. 
First,  we prove the case of full flag bundles
 $\varpi_{1, 2, \ldots, q}:  \F \ell_{1, 2, \ldots, q}(E)  \longrightarrow X$ 
by  induction on $q \geq 1$.   For the case $q  = 1$, 
the result is simply the formula (\ref{eqn:FundamentalFormula(TypeA)}).  
Hence,   we may assume the result  for the case  of  $q - 1$ with  $q \geq 2$.  
Thus,  for any polynomial $g(t_{1}, \ldots, t_{q - 1}) \in MU^{*}(X)[t_{1}, \ldots, t_{q-1}]$, 
one has 
\begin{equation}      \label{eqn:TypeAD-PFormula(InductionAssumption)}  
\begin{array}{lll} 
  &   \varpi_{1, \ldots, q - 1 *} (g(y_{1},  \ldots, y_{q - 1}))   \medskip \\
    &  =  [t_{1}^{n-1}  \cdots t_{q-1}^{n-1}] 
        \left (   
                   g(t_{1}, \ldots, t_{q-1})  \displaystyle{\prod_{1 \leq i < j \leq q-1}}  (t_{j} +_{\L} \overline{t}_{i}) 
                     \displaystyle{\prod_{i=1}^{q-1}}   \mathscr{S}^{\L} (E^{\vee}; 1/t_{i})  
         \right ).   \medskip 
\end{array}    
\end{equation} 

Now,  we consider the image of the  Gysin map $\varpi_{1, \ldots, q *} (f(y_{1}, \ldots, y_{q}))$. 
Since $\varpi_{1, \ldots, q}:  \F \ell_{1, \ldots, q}(E) \longrightarrow X$ 
is the composite of $\varpi_{q}: \F \ell_{1, \ldots, q}(E) \longrightarrow \F \ell_{1, \ldots, q-1}(E)$ 
and $\varpi_{1, \ldots, q-1}: \F \ell_{1, \ldots, q-1}(E) \longrightarrow X$, namely, 
$\varpi_{1, \ldots, q}   =  \varpi_{1, \ldots, q-1} \circ \varpi_{q}$,  we have 
\begin{equation*} 
    \varpi_{1, \ldots, q *} (f(y_{1}, \ldots, y_{q}))  = 
    \varpi_{1, \ldots, q-1 *} \circ  \varpi_{q *} (f(y_{1}, \ldots, y_{q})). 
\end{equation*} 
By the construction recalled in \S \ref{subsec:FlagBundles},  
$\varpi_{q}:  \F \ell_{1, \ldots, q}(E) \longrightarrow \F \ell_{1, \ldots, q-1}(E)$ is 
the same as the projective bundle of lines $\varpi_{q}:  P(E/U_{q-1})  \longrightarrow \F \ell_{1, \ldots, q-1}(E)$.  
The rank of the quotient bundle $E/U_{q-1}$ 
is $n - q + 1$, and therefore,  by the fundamental formula (\ref{eqn:FundamentalFormula(TypeA)}), we have   
\begin{equation*} 
     \varpi_{q *} (f(y_{1}, \ldots, y_{q-1}, y_{q})) 
   =  [t_{q}^{n-q}]    (f(y_{1}, \ldots, y_{q-1}, t_{q}) \mathscr{S}^{\L} ((E/U_{q-1})^{\vee}; 1/t_{q})). 
\end{equation*} 
Here,   the vector bundle $(E/U_{q-1})^{\vee}$ has the Chern roots 
$y_{q}, \ldots, y_{n}$ as is easily seen by the definition of the Chern roots of $E^{\vee}$,  
  and hence we deduce from (\ref{eqn:SegreSeries(ComplexCobordism)}),  
\begin{equation*} 
     \mathscr{S}^{\L}  ((E/U_{q-1})^{\vee}; 1/t_{q})   
    =  \dfrac{t_{q}^{n - q + 1}}  
                {\mathscr{P}^{\L}(t_{q})   
                  \prod_{i= q}^{n}  (t_{q} +_{\L} \overline{y}_{i})} 
    =  t_{q}^{-(q-1)}  \prod_{i=1}^{q-1} (t_{q} +_{\L} \overline{y}_{i}) 
      \mathscr{S}^{\L} (E^{\vee};  1/t_{q}). 
\end{equation*} 
Thus,  we have 
\begin{equation*}
\begin{array}{llll}   
   \varpi_{q *}  (f(y_{1}, \ldots, y_{q-1}, y_{q})) 
 &=   [t_{q}^{n-q}]   
      \left ( f(y_{1}, \ldots, y_{q-1}, t_{q})  
          t_{q}^{-(q-1)}   \displaystyle{\prod_{i=1}^{q-1}}  (t_{q} +_{\L} \overline{y}_{i} )  
         \mathscr{S}^{\L} (E^{\vee};  1/t_{q})  
       \right )        \medskip \\
 &  = [t_{q}^{n-1}]  \left (  
                            f(y_{1}, \ldots, y_{q-1},  t_{q})   \,  
                         \displaystyle{\prod_{i=1}^{q-1}}  (t_{q} +_{\L} \overline{y}_{i})   \, 
                   \mathscr{S}^{\L}   (E^{\vee};  1/t_{q})   
                      \right ),    \medskip 
\end{array}  
\end{equation*} 
and hence, 
\begin{equation*} 
\begin{array}{llll} 
    \varpi_{1, \ldots, q *}(f(y_{1}, \ldots, y_{q})) 
             & =  [t_{q}^{n-1}] 
                  \left [   
                            \varpi_{1, \ldots, q-1 *} 
                   \left (  
                            f(y_{1}, \ldots, y_{q-1},  t_{q})   \,  
                         \displaystyle{\prod_{i=1}^{q-1}}  (t_{q} +_{\L} \overline{y}_{i})   \, 
                   \mathscr{S}^{\L}   (E^{\vee}; 1/t_{q})   
                      \right )
                   \right ]    \medskip \\
          & =  [t_{q}^{n-1}] 
                  \left [   
                            \varpi_{1, \ldots, q-1 *} 
                   \left (  
                            f(y_{1}, \ldots, y_{q-1},  t_{q})   \,  
                         \displaystyle{\prod_{i=1}^{q-1}}  (t_{q} +_{\L} \overline{y}_{i})   
                   \right )  
                   \mathscr{S}^{\L}   (E^{\vee}; 1/t_{q})  
                   \right ].     \medskip  
\end{array}  
\end{equation*} 
Then,  by the induction assumption (\ref{eqn:TypeAD-PFormula(InductionAssumption)}), 
we have 
\begin{equation*} 
\begin{array}{llll}  
 &  \varpi_{1, \ldots, q-1 *} 
                   \left (  
                            f(y_{1}, \ldots, y_{q-1},  t_{q})   \,  
                         \displaystyle{\prod_{i=1}^{q-1}}  (t_{q} +_{\L} \overline{y}_{i})   
                   \right )    \medskip \\
& = [t_{1}^{n-1} \cdots t_{q-1}^{n-1}]  
      \left (   
          f(t_{1}, \ldots, t_{q-1}, t_{q})   \displaystyle{\prod_{i=1}^{q-1}} (t_{q}  +_{\L}  \overline{t}_{i}) 
                   \times \prod_{1 \leq i <  j \leq q-1} (t_{j} +_{\L} \overline{t}_{i}) 
                   \prod_{i=1}^{q-1} \mathscr{S}^{\L}  (E^{\vee}; 1/t_{i})  
       \right ),      \medskip 
\end{array}  
\end{equation*} 
and therefore,    we obtain the desired formula.

From the result of full flag bundles, we can prove the 
case of general partial flag bundles $\varpi_{q_{1}, \ldots, q_{m}}:  \F \ell_{q_{1}, \ldots, q_{m}}(E) 
\longrightarrow X$.   For simplicity,  we set $\F := \F \ell_{q_{1}, \ldots, q_{m}}(E)$, and 
$q := q_{m}$.   On $\F$, we have the universal flag of subbundles of $E$: 
\begin{equation*} 
   U_{q_{0}} = 0  \subsetneq  U_{q_{1}}  \subsetneq \cdots \subsetneq  U_{q_{m-1}}  \subsetneq 
   U_{q} \subsetneq  U_{q_{m + 1}} =  E.  
\end{equation*}
Let us consider the  fiber product  
\begin{equation*} 
  \mathcal{Y}:=  \F \ell (U_{q_{1}}) \times_{\F}  \F \ell (U_{q_{2}}/U_{q_{1}})  \times_{\F}  
         \cdots \times_{\F}  \F \ell (U_{q}/U_{q_{m-1}})  
\end{equation*} 
with the natural projection map  $\varpi_{\F}:  \mathcal{Y}  \longrightarrow \F$. 
Then, by the definition of the full flag bundles 
$\varpi_{k}':  \F \ell (U_{q_{k}}/U_{q_{k-1}})   \longrightarrow \F$ ($k = 1, 2, \ldots, m$), 
the variety $\mathcal{Y}$  is  naturally isomorphic to $\F \ell_{1, 2, \ldots, q}(E)$.  
We denote  this isomorphism by $\theta:  \mathcal{Y}  \overset{\sim}{\longrightarrow} \F \ell_{1, 2, \ldots, q}(E)$.  
Identifying $\mathcal{Y}$ with $\F \ell_{1, 2, \ldots, q}(E)$ through $\theta$, 
we have $\varpi_{1, 2, \ldots, q}  =  \varpi_{q_{1}, \ldots, q_{m-1}, q} \circ \varpi_{\F}$. 
Therefore, by the naturality of the Gysin map, we have 
   $\varpi_{1, 2, \ldots, q *}  =  \varpi_{q_{1}, \ldots, q_{m-1}, q *} \circ \varpi_{\F *}$.  
By applying the formula (\ref{eqn:varpi_*(y^rho_n-1)}) to the full flag 
bundles $\varpi_{k}': \F \ell (U_{q_{k}}/U_{q_{k-1}}) \longrightarrow \F$, 
we  have 
\begin{equation*} 
     \varpi_{k *}' \left ( \prod_{q_{k-1} < i \leq q_{k}}  y_{i}^{q_{k} - i} \right ) 
    = s^{\L}_{\emptyset}  ((U_{q_{k}}/U_{q_{k-1}})^{\vee}) 
    =  s^{\L}_{\emptyset} (y_{q_{k-1} + 1},  \ldots, y_{q_{k}}),  
\end{equation*} 
and hence we obtain 
\begin{equation*} 
   \varpi_{\F *}  \left ( \prod_{k=1}^{m}  \prod_{q_{k-1} < i \leq q_{k}} y_{i}^{q_{k} - i}  \right ) 
        = \prod_{k=1}^{m}  s^{\L}_{\emptyset}  (y_{q_{k-1} + 1},  \ldots, y_{q_{k}}).  
\end{equation*} 
Since  each $s^{\L}_{\emptyset} (y_{q_{k-1} + 1},  \ldots, y_{q_{k}})$ is an invertible 
element, we have 
\begin{equation}   \label{eqn:varpi_F(-)=1}  
    \varpi_{\F *}  \left (
                  \prod_{k=1}^{m}  s^{\L}_{\emptyset}  (y_{q_{k-1} + 1}, \ldots, y_{q_{k}})^{-1} 
                          \prod_{k=1}^{m}  \prod_{q_{k-1} < i \leq q_{k}} y_{i}^{q_{k} - i}  \right ) 
       =  1. 
\end{equation} 
With these preliminaries, we proceed with the proof as follows: 
For a polynomial $f(t_{1},  t_{2}, \ldots, t_{q})  \in 
MU^{*}(X)[t_{1}, t_{2},  \ldots, t_{q}]^{S_{q_{1}} \times S_{q_{2} - q_{1}} \times \cdots \times 
S_{q - q_{m-1}}}$,  we compute 
\begin{equation*} 
\begin{array}{llll} 
  &  \varpi_{q_{1}, \ldots, q_{m-1}, q  *} (f (y_{1}, y_{2}, \ldots, y_{q}))   \medskip \\
 & =      \varpi_{q_{1}, \ldots, q_{m-1}, q *} (f(y_{1}, \ldots, y_{q}) \times 1)  \medskip \\
 & =     \varpi_{q_{1}, \ldots, q_{m-1}, q *} 
               \left (f (y_{1}, \ldots, y_{q})  \times  
                 \varpi_{\F *}  
                         \left ( 
                               \displaystyle{
                                           \prod_{k=1}^{m} 
                             s^{\L}_{\emptyset}  (y_{q_{k-1} + 1}, \ldots, y_{q_{k}})^{-1} \prod_{k=1}^{m}} 
                                                         \prod_{q_{k-1} < i \leq q_{k}}   y_{i}^{q_{k} - i}
                            \right )  
               \right )   
\medskip \\  
   & =  \varpi_{q_{1}, \ldots, q_{m-1}, q *} \circ \varpi_{\F *} 
              \left (  f (y_{1}, \ldots, y_{q})  \times    
                          \displaystyle{\prod_{k=1}^{m}}
                     s^{\L}_{\emptyset}  (y_{q_{k-1} + 1}, \ldots, y_{q_{k}})^{-1}      
        \displaystyle{\prod_{k=1}^{m}}  \prod_{q_{k-1} < i \leq q_{k}}   y_{i}^{q_{k} - i} 
               \right )  
 \medskip \\
& =  \varpi_{1, 2, \ldots, q *} 
         \left (  f (y_{1}, \ldots, y_{q})  \times    
                             \displaystyle{\prod_{k=1}^{m}} 
               s^{\L}_{\emptyset}  (y_{q_{k-1} + 1},  \ldots, y_{q_{k}})^{-1}   
       \displaystyle{\prod_{k=1}^{m}}  \prod_{q_{k-1} < i \leq q_{k}}   y_{i}^{q_{k} - i}
               \right )  
 \medskip \\
 & = \left [ \displaystyle{\prod_{i=1}^{q}} t_{i}^{n-1} \right ] 
     \left (   f(t_{1}, \ldots, t_{q})    \times 
                  \displaystyle{\prod_{k=1}^{m}}  s^{\L}_{\emptyset}  (t_{q_{k-1} + 1},  \ldots, t_{q_{k}})^{-1}    
       \displaystyle{\prod_{k=1}^{m}}  \prod_{q_{k-1} < i \leq q_{k}}   t_{i}^{q_{k} - i}   \right. \medskip \\  
      &  \hspace{8cm}   \left.  
                       \times  \displaystyle{\prod_{1 \leq i < j \leq q}}  (t_{j} +_{\L}  \overline{t}_{i})    
        \times     \displaystyle{\prod_{i=1}^{q}}   \mathscr{S}^{\L} (E^{\vee}; 1/t_{i})   
        \right ) \medskip \\ 
& =  \left [  \displaystyle{
                                      \prod_{k=1}^{m}  \prod_{q_{k-1} < i \leq q_{k}} 
                                 }  
                                                             t_{i}^{(n-1) - (q_{k} - i)}   
          \right ] 
        \left (  f(t_{1}, \ldots, t_{q})  \times 
                    \displaystyle{\prod_{k=1}^{m}}  s^{\L}_{\emptyset}  (t_{q_{k-1} + 1}, \ldots, t_{q_{k}})^{-1}   
     \displaystyle{\prod_{1 \leq i < j \leq q}}  (t_{j} +_{\L} \overline{t}_{i})    \right. \medskip \\ 
   &  \hspace{11cm}   \left.                          
                          \times  \displaystyle{\prod_{i=1}^{q}}  \mathscr{S}^{\L}  (E^{\vee}; 1/t_{i})   
         \right ).  \medskip 
\end{array} 
\end{equation*} 
\end{proof}

In $K$-theory, since $G_{\emptyset} (-) = 1$, the above D--P formula takes 
the following form$:$  
\begin{cor}  [Darondeau--Pragacz formula of type $A$ in $K$-theory]
      \begin{equation}   \label{eqn:TypeAD-PFormula(K-theory)} 
\begin{array}{llll}
   &  \varpi_{q_{1}, \ldots ,q_{m-1}, q *}  (f(y_{1}, \ldots, y_{q}))   \medskip \\
    &   =   \left [ \displaystyle{\prod_{k=1}^{m}}  
                 \prod_{q_{k-1} < i \leq q_{k}}  t_{i}^{(n-1) - (q_{k}-i)}   \right ]  
   \left (  
               f(t_{1}, \ldots, t_{q})  \times   \displaystyle{\prod_{1 \leq i < j \leq q}} 
                 (t_{j} \ominus t_{i})  
         \;         \displaystyle{\prod_{i=1}^{q}}  G(E^{\vee};  1/t_{i}) 
  \right ).    \medskip 
\end{array}  
\end{equation} 
\end{cor}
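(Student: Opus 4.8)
The plan is to obtain \eqref{eqn:TypeAD-PFormula(K-theory)} from Theorem \ref{thm:TypeAD-PFormula(ComplexCobordism)} by specializing the universal formal group law $F_{\L}$ to the multiplicative one $F_{m}(u,v) = u \oplus v$. By Quillen's universality theorem \cite{Quillen1969}, $F_{m}$ over $K^{*} = K^{*}(\mathrm{pt})$ is classified by a unique ring homomorphism $\theta \colon \L = MU^{*} \to K^{*}$; since $K$-theory is complex-oriented, $\theta$ is the effect on coefficient rings of the unique orientation-preserving natural transformation of complex-oriented cohomology theories $MU^{*}(-) \to K^{*}(-)$ sending $c_{1}^{MU}(L)$ to $c_{1}^{K}(L)$ for every complex line bundle $L$, which I denote again by $\theta$. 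The first point to record is that $\theta$ is compatible with Gysin maps: for a proper complex-oriented map $g$, push-forward in either theory is built from the associated Thom class, and the transformation $\theta$ matches these Thom classes, so $\theta \circ g_{*}^{MU} = g_{*}^{K} \circ \theta$. Consequently $\theta$ carries the identity \eqref{eqn:TypeAD-PFormula(ComplexCobordism)}, an equality in $MU^{*}(X)$, to the corresponding equality in $K^{*}(X)$.

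Next I would track the images under $\theta$ of the ingredients on the right-hand side of \eqref{eqn:TypeAD-PFormula(ComplexCobordism)}. Since $\theta$ sends $u +_{\L} v$ to $u \oplus v$ and $\overline{u}$ to $-u/(1-\beta u)$, it sends $t_{j} +_{\L} \overline{t}_{i}$ to $t_{j} \ominus t_{i}$, so $\prod_{1 \leq i < j \leq q}(t_{j} +_{\L} \overline{t}_{i})$ becomes $\prod_{1 \leq i < j \leq q}(t_{j} \ominus t_{i})$. By the computation \eqref{eqn:K-theoreticSegreSeries}, $\theta$ sends the Segre series $\mathscr{S}^{\L}(E^{\vee}; u)$ to the $K$-theoretic Segre series $G(E^{\vee}; u)$. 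Finally, $\theta$ sends the universal Schur function $s^{\L}_{\emptyset}(\x_{n})$ to the Grassmann Grothendieck polynomial $G_{\emptyset}(\x_{n})$, which equals $1$ (as recalled in \S\ref{subsec:USF}); hence each factor $s^{\L}_{\emptyset}(t_{q_{k-1}+1}, \ldots, t_{q_{k}})^{-1}$ goes to $1$ and the whole product over $k$ disappears. Since $\theta$ acts coefficientwise on the ring of (Laurent) power series in $t_{1}, \ldots, t_{q}$ with coefficients in $MU^{*}(X)$, it commutes with the operator that extracts the coefficient of a fixed monomial; applying $\theta$ to both sides of \eqref{eqn:TypeAD-PFormula(ComplexCobordism)} and feeding in these substitutions yields exactly \eqref{eqn:TypeAD-PFormula(K-theory)}.

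The only genuinely non-formal point, hence what I regard as the main obstacle, is the compatibility of $\theta$ with push-forwards; everything else is bookkeeping about formal group laws. If one prefers a self-contained argument that also dispenses with any worry about which $f \in K^{*}(X)[t_{1}, \ldots, t_{q}]^{S_{q_{1}} \times \cdots \times S_{q - q_{m-1}}}$ lie in the image of $\theta$, one can instead re-run the proof of Theorem \ref{thm:TypeAD-PFormula(ComplexCobordism)} verbatim in $K^{*}(-)$: Quillen's residue formula \eqref{eqn:QuillenResidueFormula} has an evident $K$-theoretic version, so the fundamental projective-bundle formula \eqref{eqn:FundamentalFormula(TypeA)} becomes $\varpi_{1 *}(f(y_{1})) = [t^{n-1}]\bigl(f(t)\, G(E^{\vee}; 1/t)\bigr)$; the induction on $q$ for full flag bundles goes through unchanged with $\mathscr{S}^{\L}$ replaced by $G$ and $+_{\L}, \overline{(\cdot)}$ replaced by $\oplus, \ominus$; and in the passage from full to partial flag bundles one uses $G_{\emptyset} = 1$ in place of \eqref{eqn:varpi_*(y^rho_n-1)}, so that the analogue of \eqref{eqn:varpi_*(y^rho_n-1)} simply reads $\varpi_{*}(\bm{y}^{\rho_{n-1}}) = 1$ and no correcting factor $s^{\L}_{\emptyset}(\cdots)^{-1}$ is needed. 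This reproof is entirely routine.
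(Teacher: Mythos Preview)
Your proposal is correct and follows exactly the same approach as the paper, which simply remarks that in $K$-theory one has $G_{\emptyset}(-)=1$, so specializing Theorem~\ref{thm:TypeAD-PFormula(ComplexCobordism)} from $F_{\L}$ to $F_{m}$ kills the factors $s^{\L}_{\emptyset}(\cdots)^{-1}$ and yields the stated formula. Your write-up spells out the specialization (and the compatibility of $\theta$ with Gysin maps) in more detail than the paper does, but the content is identical.
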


\begin{rem}  \label{rem:SymmetrizingOperatorDescription(TypeA)}  
It is known that Gysin maps for various flag bundles 
 can be described as  certain  symmetrizing operators
$($see Bressler--Evens \cite[Theorem 1.8]{Bressler-Evens1990},   
Nakagawa--Naruse   \cite[Theorem 2.5, Corollary 2.6]{Nakagawa-Naruse2018} for complex cobordism$)$.  
For the partial flag bundle  $\varpi_{q_{1}, \ldots, q_{m} *}: MU^{*}(\F \ell_{q_{1}, \ldots, q_{m}}(E))  \longrightarrow MU^{*}(X)$, 
the formula is given as follows$:$ 
For a symmetric polynomial 
$f(t_{1}, \ldots, t_{n})  \in MU^{*}(X)[t_{1}, \ldots, t_{n}]^{S_{q_{1}} \times S_{q_{2}-q_{1}} \times \cdots \times S_{n - q_{m}}}$,  
one has 
\begin{equation}
\begin{array}{lll} 
 &   \varpi_{q_{1}, \ldots, q_{m} *} (f(y_{1}, \ldots, y_{n})) \medskip \\
  &  =   \displaystyle{
                               \sum_{\overline{w}  \in S_{n}/S_{q_{1}} \times S_{q_{2} - q_{1}} 
                                                                \times \cdots \times S_{n - q_{m}}
                                       }
                           }   w \cdot \left [ 
                      \dfrac{ f(y_{1}, \ldots, y_{n}) } 
                              {  \prod_{k=1}^{m}
                                                           \prod_{ q_{k-1} < i \leq q_{k} 
                                                                    } 
                                                             \prod_{
                                                                                      q_{k} < j \leq n  
                                                                                    }  
                                          (y_{i}  +_{\L}  \overline{y}_{j})  
                              } 
                       \right ].     \medskip 
\end{array} 
\end{equation} 
\end{rem} 

\subsection{Darondeau--Pragacz formula of type $C$ in complex cobordism}  
The same technique can be applied to the type $C$ case because 
type $C$ flag bundles are also constructed as a chain of projective bundles of lines. 
Indeed,  we can extend the Darondeau--Pragacz formula 
\cite[Theorem 2.1]{Darondeau-Pragacz2017} in the following manner: 
Let $E  \longrightarrow X$ be a symplectic vector bundle of rank $2n$. 
Given a sequence of integers $q_{0} = 0< q_{1} < \cdots<  q_{m} \leq n$,  
we set $q := q_{m}$.  
Then, the following Gysin formula holds  for the isotropic partial flag bundle
$\varpi_{q_{1}, \ldots, q_{m-1}, q} =\varpi^{C}_{q_{1}, \ldots, q_{m-1}, q}:  \F \ell^{C}_{q_{1}, \ldots, q_{m-1}, q}(E) 
\longrightarrow X$.    
\begin{theorem}   [Darondeau--Pragacz formula of type $C$ in complex cobordism] \label{thm:TypeCD-PFormula(ComplexCobordism)} 
For a polynomial 
$f (t_{1}, \ldots, t_{q})   \in MU^{*}(X)[t_{1}, \ldots, t_{q}]^{S_{q_{1}} \times S_{q_{2} - q_{1}} \times \cdots  \times S_{q - q_{m-1}}}$, 
one has 
\begin{equation*} 
\begin{array}{lll} 
  & \varpi_{q_{1}, \ldots, q_{m-1}, q *}(f(y_{1}, \ldots, y_{q}))    \medskip \\
 &    =  \left [  \displaystyle{\prod_{k=1}^{m}}   \prod_{q_{k-1} < i \leq q_{k}}   t_{i}^{(2n - 1) - (q_{k} - i)} 
           \right ]  
    \left (  
            f(t_{1}, \ldots, t_{q})  
            \times \displaystyle{\prod_{k=1}^{m}}  
                s^{\L}_{\emptyset} (t_{q_{k-1} + 1}, \ldots, t_{q_{k}})^{-1}   \right.  \medskip \\
  &  \hspace{6cm}  
      \left.  
       \times   \displaystyle{\prod_{1 \leq i < j \leq q}} 
        (t_{j} +_{\L}  \overline{t}_{i})  (t_{j} +_{\L} t_{i} +_{\L}   z)  
           \prod_{i=1}^{q}   {\mathscr{S}}^{\L}  (E^{\vee}; 1/t_{i}) 
    \right ).    \medskip 
\end{array}  
\end{equation*} 
\end{theorem}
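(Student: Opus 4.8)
The plan is to mimic the type $A$ argument, but now built from the chain of projective bundles describing $\F\ell^{C}_{1,\ldots,q}(E)$ recalled in \S\ref{subsubsec:ConstructionIsotropicFullFlagBundle(TypeC)}. As in the type $A$ proof, I would first treat the isotropic full flag bundles $\varpi_{1,2,\ldots,q}:\F\ell^{C}_{1,2,\ldots,q}(E)\longrightarrow X$ by induction on $q\geq 1$, and then deduce the general partial case by the fiber-product trick with the universal Schur class $s^{\L}_{\emptyset}$.

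For the base case $q=1$, the space $\F\ell^{C}_{1}(E)=P(E)$ is an honest projective bundle of lines in a rank $2n$ bundle, so the fundamental formula (\ref{eqn:FundamentalFormula(TypeA)}) applies directly and gives $\varpi_{1*}(f(y_{1}))=[t_{1}^{2n-1}](f(t_{1})\,\mathscr{S}^{\L}(E^{\vee};1/t_{1}))$, which is the claimed formula for $q=1$ (the product over $1\leq i<j\leq q$ being empty). For the inductive step I would factor $\varpi_{1,\ldots,q}=\varpi_{1,\ldots,q-1}\circ\varpi_{q}$, where $\varpi_{q}:\F\ell^{C}_{1,\ldots,q}(E)=P(U_{q-1}^{\perp}/U_{q-1})\longrightarrow\F\ell^{C}_{1,\ldots,q-1}(E)$ is the projective bundle of lines in the \emph{rank $2n-2(q-1)$} symplectic bundle $U_{q-1}^{\perp}/U_{q-1}$. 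Applying (\ref{eqn:FundamentalFormula(TypeA)}) to $\varpi_{q}$ and then rewriting $\mathscr{S}^{\L}((U_{q-1}^{\perp}/U_{q-1})^{\vee};1/t_{q})$ in terms of $\mathscr{S}^{\L}(E^{\vee};1/t_{q})$ via the Segre-series formula (\ref{eqn:SegreSeries(ComplexCobordism)}) is the heart of the calculation: since (as computed in \S\ref{subsubsec:ConstructionIsotropicFullFlagBundle(TypeC)}) the Chern roots of $(U_{q-1}^{\perp}/U_{q-1})^{\vee}$ are $y_{q},\ldots,y_{n}$ together with $\overline{y}_{q}+_{\L}\overline{z},\ldots,\overline{y}_{n}+_{\L}\overline{z}$, while those of $E^{\vee}$ are $y_{1},\ldots,y_{n}$ together with $\overline{y}_{1}+_{\L}\overline{z},\ldots,\overline{y}_{n}+_{\L}\overline{z}$, the ratio produces exactly a factor $\prod_{i=1}^{q-1}(t_{q}+_{\L}\overline{y}_{i})(t_{q}+_{\L}y_{i}+_{\L}z)$ (up to the appropriate power of $t_{q}$). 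Feeding this into the induction hypothesis applied to $\varpi_{1,\ldots,q-1*}$ of the polynomial $f(y_{1},\ldots,y_{q-1},t_{q})\prod_{i=1}^{q-1}(t_{q}+_{\L}\overline{y}_{i})(t_{q}+_{\L}y_{i}+_{\L}z)$ assembles the full antisymmetrizing product $\prod_{1\leq i<j\leq q}(t_{j}+_{\L}\overline{t}_{i})(t_{j}+_{\L}t_{i}+_{\L}z)$ and the product $\prod_{i=1}^{q}\mathscr{S}^{\L}(E^{\vee};1/t_{i})$, together with the correct exponent $2n-1$ on each $t_{i}$, completing the induction.

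For the reduction from full to partial flag bundles I would follow verbatim the device used in the proof of Theorem \ref{thm:TypeAD-PFormula(ComplexCobordism)}: form the fiber product $\mathcal{Y}:=\F\ell(U_{q_{1}})\times_{\F}\F\ell(U_{q_{2}}/U_{q_{1}})\times_{\F}\cdots\times_{\F}\F\ell(U_{q}/U_{q_{m-1}})$ over $\F:=\F\ell^{C}_{q_{1},\ldots,q_{m}}(E)$, identify it with $\F\ell^{C}_{1,\ldots,q}(E)$, use $\varpi_{*}(\bm{y}^{\rho_{n-1}})=s^{\L}_{\emptyset}(\bm{y}_{n})$ of (\ref{eqn:varpi_*(y^rho_n-1)}) on each fiber $\F\ell(U_{q_{k}}/U_{q_{k-1}})$ to get $\varpi_{\F*}\!\left(\prod_{k=1}^{m}s^{\L}_{\emptyset}(y_{q_{k-1}+1},\ldots,y_{q_{k}})^{-1}\prod_{k=1}^{m}\prod_{q_{k-1}<i\leq q_{k}}y_{i}^{q_{k}-i}\right)=1$ as in (\ref{eqn:varpi_F(-)=1}), and then push the symmetric polynomial $f$ down the two sides of the commutative square. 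The only point to check here is that nothing about the symplectic structure interferes with the type $A$ full flag bundles over $\F$ — which is clear since $\F\ell(U_{q_{k}}/U_{q_{k-1}})$ are ordinary (non-isotropic) type $A$ flag bundles of the subquotients.

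The main obstacle I expect is purely bookkeeping: correctly tracking the power of $t_{q}$ that comes out of the Segre-series identity for $(U_{q-1}^{\perp}/U_{q-1})^{\vee}$ (the rank drops by $2$ at each step, so the naive $t^{n-q}\to t^{n-1}$ shift of type $A$ is replaced by a $t^{2n-2(q-1)-1}\to t^{2n-1}$ shift, and one must verify the extra $q-1$ new factors $(t_{q}+_{\L}y_{i}+_{\L}z)$ absorb the discrepancy), and then checking that the induction hypothesis applied to $f(y_{1},\ldots,y_{q-1},t_{q})\prod_{i=1}^{q-1}(t_{q}+_{\L}\overline{y}_{i})(t_{q}+_{\L}y_{i}+_{\L}z)$ — which is symmetric in the right blocks of variables because the extra factors are — reproduces the product $\prod_{1\leq i<j\leq q}$ in the stated form rather than something that only agrees after a further symmetrization. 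Apart from that, the argument is a routine transcription of Darondeau–Pragacz's type $C$ proof with $\mathscr{S}^{\L}$ and $+_{\L}$ in place of $s$ and $+$, plus the $s^{\L}_{\emptyset}$ correction that already appeared in type $A$.
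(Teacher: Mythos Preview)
Your proposal is correct and follows essentially the same route as the paper: induction on $q$ for the full isotropic flag bundle using the fundamental formula (\ref{eqn:FundamentalFormula(TypeA)}) applied to $\varpi_{q}:P(U_{q-1}^{\perp}/U_{q-1})\to\F\ell^{C}_{1,\ldots,q-1}(E)$, the Segre-series identity producing the extra factors $\prod_{i=1}^{q-1}(t_{q}+_{\L}\overline{y}_{i})(t_{q}+_{\L}y_{i}+_{\L}z)$ and the shift to $t_{q}^{2n-1}$, followed by the fiber-product reduction with the $s^{\L}_{\emptyset}$ correction exactly as in type $A$. The bookkeeping concerns you flag are precisely the computations the paper carries out, and they go through without difficulty.
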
  

\begin{proof} 
As in the proof due to  Darondeau--Pragacz \cite[Theorem 2.1]{Darondeau-Pragacz2017}, 
  we first consider the case of isotropic full flag bundles.  
We shall  prove the formula for $\varpi_{1, 2, \ldots, q}:  \F \ell^{C}_{1, 2, \ldots, q} (E) 
\longrightarrow X$   by induction on $q \geq 1$.   
For the case $q = 1$, the result follows from the formula 
(\ref{eqn:FundamentalFormula(TypeA)}) since $\F \ell^{C}_{1}(E)  = P(E)$. 
Hence,   we may assume the result for the case of $q-1$ with $q \geq 2$.  
Let us consider the projection $\varpi_{q}:  \F \ell^{C}_{1, \ldots, q}(E)  
\longrightarrow \F \ell^{C}_{1, \ldots, q-1}(E)$, which is the same as 
the projective bundle of lines $\varpi_{q}:  P(U_{q-1}^{\perp}/U_{q-1})  \longrightarrow 
\F \ell^{C}_{1, \ldots, q-1}(E)$.   Here,  the rank of the quotient bundle $U_{q-1}^{\perp}/U_{q-1}$ 
is  $2(n-q) + 2$.  Therefore, for a polynomial $f(t_{1}, \ldots, t_{q})  \in MU^{*}(X)[t_{1}, \ldots, t_{q}]$,  
the fundamental formula (\ref{eqn:FundamentalFormula(TypeA)}) gives 
\begin{equation*} 
    \varpi_{q *}(f(y_{1}, \ldots, y_{q-1}, y_{q})) 
     = [t_{q}^{2(n-q) + 1}] 
       \left (  f(y_{1}, \ldots, y_{q-1},  t_{q})  
      \mathscr{S}^{\L} ( (U_{q-1}^{\perp}/U_{q-1})^{\vee}; 1/t_q) \right ).  
\end{equation*} 
Since the vector bundle $(U_{q-1}^{\perp}/U_{q-1})^{\vee}$ has the 
Chern roots $y_{q}, \ldots, y_{n},   \overline{y}_{q} +_{\L}  \overline{z}, 
\ldots, \overline{y}_{n} +_{\L}  \overline{z}$ as seen in  \S  \ref{subsec:FlagBundles}, 
we deduce from (\ref{eqn:SegreSeries(ComplexCobordism)}),  
\begin{equation*} 
\begin{array}{llll}  
  \mathscr{S}^{\L} ((U_{q-1}^{\perp}/U_{q-1})^{\vee}; 1/t_{q}) 
   &   =   \dfrac{t_{q}^{2(n-q) + 2}}  
{ \mathscr{P}^{\L} (t_{q})    
           \prod_{i=q}^{n}  (t_{q} +_{\L}  \overline{y}_{i}) 
                              (t_{q} +_{\L}  y_{i} +_{\L} z) }  \medskip \\
    & =    t_{q}^{-2(q-1)}  \displaystyle{\prod_{i=1}^{q-1}}  
                                                     (t_{q} +_{\L} \overline{y}_{i})  
                                                     (t_{q} +_{\L} y_{i} +_{\L}  z) 
                 \mathscr{S}^{\L}  (E^{\vee}; 1/t_{q}).    \medskip  
\end{array}  
\end{equation*} 
Thus, we have 
\begin{equation*} 
\begin{array}{llll}  
&    \varpi_{q *}  (f(y_{1}, \ldots, y_{q-1}, y_{q}))   \medskip \\
 & =   [t_{q}^{2(n-q) + 1}] 
        \left  ( 
          f(y_{1}, \ldots, y_{q-1},  t_{q})  
         t_{q}^{-2(q-1)}  \displaystyle{\prod_{i=1}^{q-1}}  
                                                     (t_{q} +_{\L} \overline{y}_{i})  
                                                     (t_{q} +_{\L} y_{i} +_{\L}  z) 
                 \mathscr{S}^{\L} (E^{\vee}; 1/t_{q})  
         \right )   \medskip \\
 & =    [t_{q}^{2n-1}] 
        \left ( 
            f(y_{1}, \ldots, y_{q-1},  t_{q})  
           \displaystyle{\prod_{i=1}^{q-1}}  
                                                     (t_{q} +_{\L} \overline{y}_{i})  
                                                     (t_{q} +_{\L} y_{i} +_{\L}  z) 
                 \mathscr{S}^{\L} (E^{\vee}; 1/t_{q}) 
         \right ).        \medskip 
\end{array} 
\end{equation*} 
Therefore, we have 
\begin{equation*} 
\begin{array}{llll}  
 &  \varpi_{1, \ldots, q *} (f(y_{1}, \ldots, y_{q})) 
   =  \varpi_{1, \ldots, q-1 *} \circ \varpi_{q *} (f(y_{1}, \ldots, y_{q}))      \medskip \\
  & =  [t_{q}^{2n-1}]  
           \left [     \varpi_{1, \ldots, q-1 *}  
                       \left (  
                            f(y_{1}, \ldots, y_{q-1},  t_{q})  
           \displaystyle{\prod_{i=1}^{q-1}}  
                                                     (t_{q} +_{\L} \overline{y}_{i})  
                                                     (t_{q} +_{\L} y_{i} +_{\L}  z) 
                 \mathscr{S}^{\L} (E^{\vee}; 1/t_{q})
                        \right ) 
            \right ]    \medskip \\
    & =  [t_{q}^{2n-1}]  
           \left [     \varpi_{1, \ldots, q-1 *}  
                       \left (  
                            f(y_{1}, \ldots, y_{q-1},  t_{q})  
           \displaystyle{\prod_{i=1}^{q-1}}  
                                                     (t_{q} +_{\L} \overline{y}_{i})  
                                                     (t_{q} +_{\L} y_{i} +_{\L}  z) 
                      \right )
                 \mathscr{S}^{\L} (E^{\vee}; 1/t_{q})                       
            \right ].      \medskip
\end{array} 
\end{equation*} 
Then, by the induction assumption, we obtain the required formula.

From the result of isotropic full flag bundles, we can prove the case of 
isotropic partial flag bundles
 $\varpi_{q_{1}, \ldots, q_{m}}:  \F \ell^{C}_{q_{1}, \ldots, q_{m}}(E) \longrightarrow X$
by the same manner as the type $A$ case.  The space $\mathcal{Y}$, that is the 
type $C$ analogue  of the space used in the proof of  Theorem \ref{thm:TypeAD-PFormula(ComplexCobordism)}, 
is  naturally isomorphic to $\F \ell^{C}_{1, 2, \ldots, q}(E)$  because 
any flag inside an isotropic subbundle is also an isotropic flag.  
\end{proof} 

In $K$-theory, the above D--P formula takes 
the following form$:$  
\begin{cor}  [Darondeau--Pragacz formula of type $C$ in $K$-theory]
      \begin{equation}   \label{eqn:TypeCD-PFormula(K-theory)} 
\begin{array}{llll}
   &  \varpi_{q_{1}, \ldots ,q_{m-1}, q *}  (f(y_{1}, \ldots, y_{q}))   \medskip \\
    &   =   \left [ \displaystyle{\prod_{k=1}^{m}}  
                 \prod_{q_{k-1} < i \leq q_{k}}  t_{i}^{(2n-1) - (q_{k}-i)}   \right ]  
   \left (  
               f(t_{1}, \ldots, t_{q})  \times   \displaystyle{\prod_{1 \leq i < j \leq q}} 
                 (t_{j} \ominus t_{i})(t_{j} \oplus t_{i} \oplus z)    \right.  \medskip \\
  &  \hspace{10cm}  \left.     \times   \;        \displaystyle{\prod_{i=1}^{q}}  G(E^{\vee};  1/t_{i}) 
  \right ).    \medskip 
\end{array}  
\end{equation} 
\end{cor}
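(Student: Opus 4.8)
The plan is to obtain the corollary directly from Theorem \ref{thm:TypeCD-PFormula(ComplexCobordism)} by specializing the universal formal group law $F_{\L}(u,v) = u +_{\L} v$ to the multiplicative law $F_{m}(u,v) = u \oplus v$. By Quillen's universality theorem (\S \ref{subsec:ComplexCobordismTheory}) there is a unique ring homomorphism $\theta: MU^{*} \cong \L \longrightarrow K^{*}$ sending $F^{MU} = F_{\L}$ to $F^{K} = F_{m}$, and since complex $K$-theory is complex-oriented this $\theta$ is realized by a natural transformation of complex-oriented cohomology theories $MU^{*}(-) \longrightarrow K^{*}(-)$. First I would recall that such a transformation commutes with Gysin (push-forward) maps and carries $MU^{*}$-theory Chern classes, hence Chern roots, to their $K$-theoretic counterparts; applying it to the identity of Theorem \ref{thm:TypeCD-PFormula(ComplexCobordism)} then turns the left-hand side into $\varpi_{q_{1}, \ldots, q_{m-1}, q *}(f(y_{1}, \ldots, y_{q}))$ computed in $K^{*}(X)$, with $f$ now regarded over $K^{*}(X)$.

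Next I would track how each factor on the right-hand side specializes. The formal expressions $t_{j} +_{\L} \overline{t}_{i}$ and $t_{j} +_{\L} t_{i} +_{\L} z$ become $t_{j} \ominus t_{i}$ and $t_{j} \oplus t_{i} \oplus z$ by the definitions in \S \ref{subsec:UFGL}. The universal Schur function $s^{\L}_{\emptyset}(t_{q_{k-1}+1}, \ldots, t_{q_{k}})$ specializes to the Grassmann Grothendieck polynomial $G_{\emptyset}(t_{q_{k-1}+1}, \ldots, t_{q_{k}}) = 1$ (\S \ref{subsec:USF}), so every factor $s^{\L}_{\emptyset}(\cdots)^{-1}$ disappears. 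The universal Segre series $\mathscr{S}^{\L}(E^{\vee}; 1/t_{i})$ specializes to the $K$-theoretic Segre series $G(E^{\vee}; 1/t_{i})$ by the computation (\ref{eqn:K-theoreticSegreSeries}). Finally, the coefficient-extraction operator $[\,\prod_{k}\prod_{q_{k-1} < i \leq q_{k}} t_{i}^{(2n-1)-(q_{k}-i)}\,]$ is additive in the (ring-valued) coefficients of a Laurent series in $t_{1}, \ldots, t_{q}$, hence commutes with applying $\theta$ coefficientwise; assembling these substitutions yields (\ref{eqn:TypeCD-PFormula(K-theory)}).

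The step needing the most care is the first one: justifying that the passage from $MU$ to $K$ along the chosen complex orientation genuinely commutes with the push-forwards $\varpi_{q_{1},\ldots,q_{m-1},q *}$ (and with the restrictions $\varpi^{*}$ implicitly used to identify the $y_{i}$ with Chern roots), so that the whole equality of Theorem \ref{thm:TypeCD-PFormula(ComplexCobordism)}, and not merely its individual symbolic ingredients, descends to $K$-theory. Once the naturality of Gysin maps under a morphism of complex-oriented theories is invoked, no real obstacle remains; in particular the coefficient extraction is legitimate because, after clearing denominators via (\ref{eqn:SegreSeries(ComplexCobordism)}), the prescribed coefficient is a finite expression in the $t_{i}$ to which $\theta$ may be applied termwise.
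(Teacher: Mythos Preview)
Your proposal is correct and follows essentially the same approach as the paper: the corollary is obtained simply by specializing Theorem~\ref{thm:TypeCD-PFormula(ComplexCobordism)} from the universal formal group law to the multiplicative one, using that $G_{\emptyset}(-) = 1$ so the factors $s^{\L}_{\emptyset}(\cdots)^{-1}$ disappear. The paper states the corollary without further proof, so your explicit verification of the naturality of Gysin maps and the termwise specialization of each factor is, if anything, more detailed than what the paper provides.
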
 

\subsection{Darondeau--Pragacz formulas of types $B$ and $D$ in complex cobordism}  

\subsubsection{Fundamental Gysin formula for quadric bundle in complex cobordism}  
Let $E  \longrightarrow    X$ be an orthogonal vector bundle of  rank $N$ ($2n + 1$ for $Y = B$,  or $2n$ for $Y = D$), 
and let $\rho_{1}:  Q(E)  \longrightarrow X$  be the associated quadric bundle of 
isotropic lines in $E$.   Denote the tautological line bundle on 
$Q(E)$ by $U_{1}$.      Put  $y_{1} = c^{MU}_{1}(U_{1}^{\vee})  \in MU^{2}(Q(E))$ 
(We also  denote the tautological line bundle on $P(E)$ by $U_{1}$, and we use 
the same symbol $y_{1}$ for $c^{MU}_{1}(U_{1}^{\vee})  \in MU^{2}(P(E))$. Thus,  
$\iota^{*}(y_{1}) = y_{1}$).  
Following Darondeau--Pragacz  \cite[\S 3.3]{Darondeau-Pragacz2017},  
we shall describe the Gysin homomorphism  $\rho_{1 *}:  MU^{*}(Q(E))  \longrightarrow 
MU^{*}(X)$.     First,  we shall describe the class $[Q(E)]  \in MU^{2}(P(E))$. 
By the definition of $Q(E)$,  it is given by the zero set of a section 
of the line bundle
    $\Hom \,  (U_{1}  \otimes U_{1}, L) \cong U_{1}^{\vee} \otimes U_{1}^{\vee} \otimes L$. 
Therefore  we have 
\begin{equation*} 
       [Q(E)]  =  c^{MU}_{1}(U_{1}^{\vee}  \otimes U_{1}^{\vee}  \otimes L) 
                 =    y_{1} +_{\L}  y_{1} +_{\L} z  =  [2]_{\L}(y_{1})  +_{\L} z. 
\end{equation*} 
Let $\varpi_{1}: P(E)  \longrightarrow X$ be the projective bundle of lines,  
and $\iota: Q(E)  \hooklongrightarrow P(E)$  be the natural inclusion. 
Then,  we have $\rho_{1} = \varpi_{1} \circ \iota$, and therefore  
$\rho_{1 *}  =  \varpi_{1 *}  \circ \iota_{*}:  MU^{*}(Q(E))  \longrightarrow MU^{*}(X)$. 
Then, using the  fundamental formula (\ref{eqn:FundamentalFormula(TypeA)}), 
which  still  holds for a formal  power series $f(t)   \in MU^{*}(X)[[t]]$,    
one can compute for $k \geq 0$, 
\begin{equation*} 
\begin{array}{llll}  
    \rho_{1 *} (y_{1}^{k})  
 & =  \varpi_{1 *}  \circ  \iota_{*} (y_{1}^{k})  
  =  \varpi_{1 *} \circ \iota_{*}(\iota^{*}(y_{1}^{k}))  
  =  \varpi_{1 *}(y_{1}^{k}  \, \iota_{*} (1) )
 =   \varpi_{1 *} (y_{1}^{k}  [Q(E)])    \medskip \\
 & = \varpi_{1 *} (y_{1}^{k}  ([2]_{\L}(y_{1})  +_{\L} z)) 
   =  [t^{N-1}]  (t^{k}  ([2]_{\L}(t)  +_{\L} z)  \mathscr{S}^{\L}(E^{\vee}; 1/t)).   \medskip  
\end{array}  
\end{equation*}  
Therefore, for a polynomial $f(t)  \in MU^{*}(X)[t]$,  the Gysin homomorphism 
$\rho_{1 *}:  MU^{*}(Q(E))   \\ \longrightarrow  MU^{*}(X)$ is given by 
\begin{equation}    \label{eqn:FundamentalFormula(TypeBD)}  
    \rho_{1 *}  (f(y_{1}))   =  [t^{N-1}]  
                                      (f(t) ([2]_{\L}(t)  +_{\L} z)  \mathscr{S}^{\L} (E^{\vee}; 1/t) 
                                      ).  
\end{equation} 
This is the fundamental formula for establishing more general Gysin 
formulas for  flag bundles of types $B$ and $D$,  
which will be given in the next subsection.

\subsubsection{Darondeau--Pragacz formula of types $B$ and  $D$  in complex cobordism} 
With the above preliminaries, we can extend  the Darondeau--Pragacz formula \cite[Theorem 3.1]{Darondeau-Pragacz2017})
  in the following manner: Let $E \longrightarrow X$ be an orthogonal 
vector bundle of rank $N$.   Given a sequence of 
integers $q_{0} = 0 < q_{1} < \cdots < q_{m} \leq n$, 
we set $q := q_{m}$.  
Then, the following Gysin formula holds for the isotropic partial flag bundle 
$\rho_{q_{1}, \ldots, q_{m-1}, q}  = \varpi^{Y}_{q_{1}, \ldots, q_{m-1}, q}  : \F \ell^{Y}_{q_{1}, \ldots, q_{m-1}, q}(E) \longrightarrow X$. 
\begin{theorem} [Darondeau--Pragacz formula of types $B$ and $D$ in complex cobordism] 
\label{thm:TypeBDD-PFormula(ComplexCobordism)}    
For a polynomial 
$f(t_{1}, \ldots, t_{q})  \in MU^{*}(X)[t_{1}, \ldots, t_{q}]^{S_{q_{1}} \times S_{q_{2} - q_{1}} \times \cdots \times S_{q - q_{m-1}}}$, 
one has  
\begin{equation*} 
\begin{array}{lll} 
   &  \rho_{q_{1}, \ldots, q_{m-1}, q *} (f(y_{1}, \ldots, y_{q}))    \medskip \\
   & =  \left [ \displaystyle{\prod_{k=1}^{m}} \prod_{q_{k-1} < i \leq q_{k}}  
                 t_{i}^{(N-1) - (q_{k} - i)}  \right ]  
      \left ( 
              f(t_{1}, \ldots, t_{q})   \times \displaystyle{\prod_{i=1}^{q}}   ([2]_{\L}(t_{i}) +_{\L} z)   \times \displaystyle{\prod_{k=1}^{m}}   s^{\L}_{\emptyset} (t_{q_{k-1} + 1},  \ldots, t_{q_{k}})^{-1}  \right. \medskip \\ 
      &  \hspace{6.5cm}  \left.        \times   \displaystyle{\prod_{1 \leq i < j \leq q}}  (t_{j} +_{\L}  \overline{t}_{i})    (t_{j} +_{\L} t_{i} +_{\L} z)  
                          \displaystyle{\prod_{i=1}^{q}}   \mathscr{S}^{\L} (E^{\vee}; 1/t_{i})  
                                         \right ).    \medskip 
\end{array} 
\end{equation*}
\end{theorem}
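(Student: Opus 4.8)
The plan is to imitate the two-step structure of the proofs of Theorems \ref{thm:TypeAD-PFormula(ComplexCobordism)} and \ref{thm:TypeCD-PFormula(ComplexCobordism)}. First I would prove the statement for the isotropic \emph{full} flag bundle $\rho_{1,2,\ldots,q}\colon \F \ell^{Y}_{1,2,\ldots,q}(E)\longrightarrow X$ (the case $q_{k}=k$, $m=q$, in which every $s^{\L}_{\emptyset}(t_{k})=1$) by induction on $q\ge 1$. The base case $q=1$ is $\F \ell^{Y}_{1}(E)=Q(E)$ and is precisely the fundamental quadric-bundle formula (\ref{eqn:FundamentalFormula(TypeBD)}).

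For the inductive step I would write $\rho_{1,\ldots,q}=\rho_{1,\ldots,q-1}\circ\rho_{q}$, where $\rho_{q}\colon \F \ell^{Y}_{1,\ldots,q}(E)=Q(U_{q-1}^{\perp}/U_{q-1})\longrightarrow \F \ell^{Y}_{1,\ldots,q-1}(E)$ is the quadric bundle of isotropic lines in the orthogonal bundle $U_{q-1}^{\perp}/U_{q-1}$, of rank $N-2(q-1)$. Applying (\ref{eqn:FundamentalFormula(TypeBD)}) to this quadric bundle gives $\rho_{q*}(f(y_{1},\ldots,y_{q-1},y_{q}))=[t_{q}^{\,N-2q+1}]\bigl(f(y_{1},\ldots,y_{q-1},t_{q})([2]_{\L}(t_{q})+_{\L}z)\,\mathscr{S}^{\L}((U_{q-1}^{\perp}/U_{q-1})^{\vee};1/t_{q})\bigr)$. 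The Chern-root computation recalled in the construction of these isotropic flag bundles — identical to §\ref{subsubsec:ConstructionIsotropicFullFlagBundle(TypeC)}, with the harmless addition of the root $y_{n+1}$ when $Y=B$ — says that the Chern roots of $(U_{q-1}^{\perp}/U_{q-1})^{\vee}$ are precisely those of $E^{\vee}$ with the $2(q-1)$ roots $y_{1},\ldots,y_{q-1}$ and $\overline{y}_{1}+_{\L}\overline{z},\ldots,\overline{y}_{q-1}+_{\L}\overline{z}$ deleted. Combining this with the product form (\ref{eqn:SegreSeries(ComplexCobordism)}) of the Segre series and the identity $\overline{\overline{y}_{i}+_{\L}\overline{z}}=y_{i}+_{\L}z$ yields $\mathscr{S}^{\L}((U_{q-1}^{\perp}/U_{q-1})^{\vee};1/t_{q})=t_{q}^{-2(q-1)}\prod_{i=1}^{q-1}(t_{q}+_{\L}\overline{y}_{i})(t_{q}+_{\L}y_{i}+_{\L}z)\,\mathscr{S}^{\L}(E^{\vee};1/t_{q})$. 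Substituting, the factor $t_{q}^{-2(q-1)}$ turns the coefficient index $[t_{q}^{\,N-2q+1}]$ into $[t_{q}^{\,N-1}]$; since $\mathscr{S}^{\L}(E^{\vee};1/t_{q})$ is pulled back from $X$ it comes out of $\rho_{1,\ldots,q-1*}$, and the induction hypothesis applies to the remaining polynomial in $y_{1},\ldots,y_{q-1}$. Expanding and using $\prod_{i=1}^{q-1}(t_{q}+_{\L}\overline{t}_{i})(t_{q}+_{\L}t_{i}+_{\L}z)\cdot\prod_{1\le i<j\le q-1}(t_{j}+_{\L}\overline{t}_{i})(t_{j}+_{\L}t_{i}+_{\L}z)=\prod_{1\le i<j\le q}(t_{j}+_{\L}\overline{t}_{i})(t_{j}+_{\L}t_{i}+_{\L}z)$ together with $\prod_{i=1}^{q-1}([2]_{\L}(t_{i})+_{\L}z)\cdot([2]_{\L}(t_{q})+_{\L}z)=\prod_{i=1}^{q}([2]_{\L}(t_{i})+_{\L}z)$ closes the induction and proves the full flag case.

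To pass to a general partial flag bundle I would set $\F:=\F \ell^{Y}_{q_{1},\ldots,q_{m}}(E)$, $q:=q_{m}$, and form the fiber product $\mathcal{Y}:=\F \ell(U_{q_{1}})\times_{\F}\F \ell(U_{q_{2}}/U_{q_{1}})\times_{\F}\cdots\times_{\F}\F \ell(U_{q}/U_{q_{m-1}})$ of the type-$A$ full flag bundles of the successive subquotients, with projection $\varpi_{\F}\colon\mathcal{Y}\longrightarrow\F$. Since a flag contained in an isotropic subbundle is automatically isotropic, $\mathcal{Y}\cong\F \ell^{Y}_{1,2,\ldots,q}(E)$ over $X$, so $\rho_{1,2,\ldots,q*}=\rho_{q_{1},\ldots,q_{m-1},q*}\circ\varpi_{\F*}$. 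Exactly as in the type-$A$ argument, (\ref{eqn:varpi_*(y^rho_n-1)}) applied to each factor gives $\varpi_{\F*}\bigl(\prod_{k=1}^{m}s^{\L}_{\emptyset}(y_{q_{k-1}+1},\ldots,y_{q_{k}})^{-1}\prod_{k=1}^{m}\prod_{q_{k-1}<i\le q_{k}}y_{i}^{q_{k}-i}\bigr)=1$; since $f$ is $S_{q_{1}}\times\cdots\times S_{q-q_{m-1}}$-invariant, $f(y_{1},\ldots,y_{q})$ is pulled back from $\F$, so by the projection formula $\rho_{q_{1},\ldots,q_{m-1},q*}(f(y_{1},\ldots,y_{q}))=\rho_{1,2,\ldots,q*}\bigl(f(y_{1},\ldots,y_{q})\prod_{k}s^{\L}_{\emptyset}(y_{q_{k-1}+1},\ldots,y_{q_{k}})^{-1}\prod_{k}\prod_{q_{k-1}<i\le q_{k}}y_{i}^{q_{k}-i}\bigr)$. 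Inserting the full flag formula just proved and trading the extraction $[\prod_{i=1}^{q}t_{i}^{N-1}]$ against the monomial $\prod_{k}\prod_{i}t_{i}^{q_{k}-i}$ to get the index $\prod_{k=1}^{m}\prod_{q_{k-1}<i\le q_{k}}t_{i}^{(N-1)-(q_{k}-i)}$ produces the asserted formula.

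I expect the only real difficulty to be the bookkeeping inside the inductive step: keeping the exponent shift from $N-2q+1$ to $N-1$ synchronized with the $2(q-1)$ Chern roots stripped off in passing from $E^{\vee}$ to $(U_{q-1}^{\perp}/U_{q-1})^{\vee}$, and verifying that the two extra families of factors, $([2]_{\L}(t_{i})+_{\L}z)$ and $(t_{j}+_{\L}t_{i}+_{\L}z)$, accumulate correctly alongside the familiar $(t_{j}+_{\L}\overline{t}_{i})$ across the induction. No separate treatment of types $B$ and $D$ is needed: the extra root $y_{n+1}$ for $Y=B$ enters only through $\mathscr{S}^{\L}(E^{\vee};1/t_{i})$, which is transported untouched. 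Everything conceptual — the class $[Q(E)]=[2]_{\L}(y_{1})+_{\L}z$ and hence (\ref{eqn:FundamentalFormula(TypeBD)}), the Chern-root description of the universal isotropic flag, and the $s^{\L}_{\emptyset}$-reduction via (\ref{eqn:varpi_*(y^rho_n-1)}) — is already available.
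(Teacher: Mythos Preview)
Your proposal is correct and follows essentially the same approach as the paper's proof: induction on $q$ for the full isotropic flag bundle using the quadric-bundle formula (\ref{eqn:FundamentalFormula(TypeBD)}) together with the Segre series identity $\mathscr{S}^{\L}((U_{q-1}^{\perp}/U_{q-1})^{\vee};1/t_{q})=t_{q}^{-2(q-1)}\prod_{i=1}^{q-1}(t_{q}+_{\L}\overline{y}_{i})(t_{q}+_{\L}y_{i}+_{\L}z)\,\mathscr{S}^{\L}(E^{\vee};1/t_{q})$, followed by the fiber-product reduction with the $s^{\L}_{\emptyset}$-factor for the partial flag case. The paper's own proof is terser, simply declaring the argument analogous to the type $C$ case and writing out only the first inductive step, but your more detailed bookkeeping matches it exactly.
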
 

\begin{proof} 
One can prove the theorem in the same manner as the type $C$ case, 
replacing the fundamental formula (\ref{eqn:FundamentalFormula(TypeA)}) for a 
projective bundle  with 
the fundamental formula (\ref{eqn:FundamentalFormula(TypeBD)})
for a quadric bundle.   For example, the first step of the induction 
proceeds as follows:  The projection $\rho_{q}:  \F \ell^{Y}_{1, \ldots, q} (E) \longrightarrow 
\F \ell^{Y}_{1, \ldots, q-1}(E)$  is the same as the quadric bundle of 
isotropic lines $\rho_{q}:  Q(U_{q-1}^{\perp}/U_{q-1})  \longrightarrow \F \ell^{Y}_{1, \ldots, q-1}(E)$.  
The rank of the quotient bundle $U_{q-1}^{\perp}/U_{q-1}$ is $N - 2(q-1)$.  
Therefore,  the fundamental formula (\ref{eqn:FundamentalFormula(TypeBD)}) gives 
\begin{equation*} 
  \rho_{q *} (f(y_{1}, \ldots, y_{q-1}, y_{q})) 
  = [t_{q}^{N - 2q + 1}] (f(y_{1}, \ldots, y_{q-1}, t_{q}) ([2]_{\L}(t_{q}) +_{\L} z) 
                                    \mathscr{S}^{\L} ((U_{q-1}^{\perp}/U_{q})^{\vee};  1/t_{q})). 
\end{equation*} 
As with the type $C$ case, we know 
\begin{equation*} 
  \mathscr{S}^{\L} ((U_{q-1}^{\perp}/U_{q-1})^{\vee}; 1/t_{q}) 
     =    t_{q}^{-2(q-1)}  \displaystyle{\prod_{i=1}^{q-1}}  
                                                     (t_{q} +_{\L} \overline{y}_{i})  
                                                     (t_{q} +_{\L} y_{i} +_{\L}  z) 
                 \mathscr{S}^{\L}  (E^{\vee}; 1/t_{q}),  
\end{equation*} 
and hence we have 
\begin{equation*} 
\begin{array}{lll} 
 &  \rho_{q *} (f(y_{1}, \ldots, y_{q-1}, y_{q}))  \medskip \\
  & = [t_{q}^{N-1}]  \left (  f(y_{1}, \ldots, y_{q-1}, t_{q})  ([2]_{\L}(t_{q}) +_{\L} z) 
       \displaystyle{\prod_{1 \leq i \leq q-1}} (t_{q} +_{\L} \overline{y}_{i}) 
                                                          (t_{q} +_{\L} y_{i} +_{\L} z) 
                                                           \mathscr{S}^{\L} (E^{\vee};  1/t_{q}) 
                       \right ). 
\end{array} 
\end{equation*} 
 The rest of the proof is entirely analogous to the type $C$ case. 
\end{proof}

\section{Applications --Universal quadratic Schur functions--}   \label{sec:Applications} 
\subsection{Quadratic Schur functions} 
\subsubsection{Definition of  quadratic Schur functions}    \label{subsubsec:DefinitionQSF}  
In \cite[\S 4.2]{Darondeau-Pragacz2017},  Darondeau--Pragacz introduced 
the type $BCD$ analogues of  the usual Schur functions, 
called the {\it quadratic Schur functions}.  First,  we recall their definition: 
Let $E \longrightarrow X$ be a symplectic vector bundle of rank $2n$,    
where we  assume that the line bundle $L$ is  trivial.  
Then,  using  the  given symplectic form,  
we have the isomorphism $E \cong E^{\vee}$ as complex 
vector bundles, and the (cohomology) Chern roots of $E \cong E^{\vee}$ are 
given by $\pm y_{1},  \ldots, \pm y_{n}$.   
Thus,  the Segre series of $E^{\vee}$ is formally given by 
\begin{equation*} 
  s(E^{\vee}; u)  =  \prod_{j=1}^{n}  \dfrac{1}{1 - y_{j} u}  \dfrac{1}{1 + y_{j}u}  
            = \prod_{j=1}^{n}  \dfrac{1}{1 - y_{j}^{2} u^{2}}  
 = \sum_{l \geq 0}  h_{l} (\bm{y}_{n}^{2})  u^{2l},     
\end{equation*}
where $h_{l}(\bm{y}_{n}^{2}) = h_{l}(y_{1}^{2},  \ldots, y_{n}^{2})$ denotes 
the complete symmetric polynomial in $y_{1}^{2},  \ldots, y_{n}^{2}$.  
Hence,  the Segre classes of $E$  are given by  
\begin{equation*} 
     s_{2k + 1}(E^{\vee})  = 0  \quad \text{and}   \quad 
     s_{2k} (E^{\vee})  = h_{k}(\bm{y}_{n}^{2}) \;  (k = 0, 1, 2, \ldots). 
\end{equation*} 
Then, for a sequence of  integers  $I  = (I_{1},  \ldots, I_{n}) \in \Z^{n}$, 
Darondeau--Pragacz defines  the cohomology class $s^{(2)}_{I}(E^{\vee})$ to be 
\begin{equation}    \label{eqn:DefinitionQuadraticSchurFunctions}   
    s^{(2)}_{I}(E^{\vee})   := \det \, (s_{I_{i} + 2(j-i)} (E^{\vee}))_{1 \leq i, j \leq n}.  
\end{equation} 
By analogy with the usual Schur functions, 
they called 
$s^{(2)}_{I}(E^{\vee})  = s^{(2)}_{I}(\y_{n})$ the quadratic 
Schur function. 
Notice that their Segre classes $s_{i}(E)$'s are  the same as  those of our $s_{i}(E^{\vee})$. 
However, this difference does not affect the above definition because of 
$E \cong E^{\vee}$ as complex vector bundles.  
From this definition,  we observe that 
the following property holds:  
 If a partition  $I = (I_{1}, \ldots, I_{n})  \in \mathcal{P}_{n}$ has an odd part, say $I_{i}$, 
then all  entries in the  $i$-th row of the determinant $(\ref{eqn:DefinitionQuadraticSchurFunctions})$
 are zero because  $s_{\mathrm{odd}}(E^{\vee}) = 0$.  Therefore, for such a partition,  
 $s^{(2)}_{I}(E^{\vee})$ must be zero.  
Thus,  the quadratic Schur function $s^{(2)}_{I}(E^{\vee})$ is non-trivial 
only if  all  parts of $I$ are even numbers. 
  If $I$ is  such a partition,  that is, $I$ is of the form 
$2  J$ for some partition $J$,  then we see that 
\begin{equation*} 
  s^{(2)}_{I}(E^{\vee}) = \det \, (s_{I_{i} + 2(j-i)}(E^{\vee}))
        = \det \, (h_{J_{i} + (j-i)} (\bm{y}_{n}^{2}))  = s^{[2]}_{J}(E^{\vee}).  
\end{equation*} 
Here the class $s^{[2]}_{J}(E^{\vee})$ is introduced in Pragacz--Ratajski \cite[Theorem 5.13]{Pragacz-Ratajski1997}, 
and is defined as $s_{J}(\bm{y}_{n}^{2})$, the ordinary Schur polynomial  corresponding to the  partition $J$   
in the variables $\bm{y}_{n}^{2} = (y_{1}^{2},  \ldots, y_{n}^{2})$.

\subsubsection{Gysin formulas for  quadratic Schur functions} 
We shall describe the quadratic Schur functions in terms of  Gysin maps of 
type $C$ full flag bundles.   Let $\lambda  = (\lambda_{1}, \ldots, \lambda_{q})$ 
be a partition of length $\ell (\lambda) = q \leq n$.  Consider the type $C$ full flag 
bundle   $\varpi_{1, 2,  \ldots, q}:  \F \ell^{C}_{1, 2, \ldots, q}(E)   \longrightarrow X$, 
and the induced  Gysin map in cohomology, 
\begin{equation*} 
   \varpi_{1, \ldots, q *}:  H^{*}(\F \ell^{C}_{1, \ldots, q}(E))   \longrightarrow H^{*}(X). 
\end{equation*} 
Then,  the following proposition is essentially given by 
Darondeau--Pragacz \cite[Proposition 4.3]{Darondeau-Pragacz2017}: 
\begin{prop} [Characterization of the quadratic Schur functions]  \label{prop:CharacterizationQSF}
  For the Gysin map $\varpi_{1, \ldots, q *}:  H^{*}(\F \ell^{C}_{1, \ldots, q}(E))  \longrightarrow 
H^{*}(X)$, the following formula holds$:$ 
\begin{equation*} 
    \varpi_{1, \ldots, q *} (\bm{y}^{\lambda + \rho^{(2)}_{2q-1} + (2(n-q))^{q}})  
     =  \varpi_{1, \ldots, q *} \left ( \prod_{i=1}^{q} y_{i}^{\lambda_{i} + 2n - 2i + 1} \right )   
            =  s^{(2)}_{\lambda}(E^{\vee}), 
\end{equation*} 
where $\rho^{(2)}_{2q-1} := (2q-1, 2q-3, \ldots, 3, 1)$.   
\end{prop}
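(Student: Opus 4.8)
The plan is to derive the proposition from the type~$C$ Darondeau--Pragacz formula of Theorem~\ref{thm:TypeCD-PFormula(ComplexCobordism)}, specialized to ordinary cohomology, and then to carry out an elementary bookkeeping of exponents that turns the resulting coefficient extraction into a Jacobi--Trudi determinant. First I would specialize Theorem~\ref{thm:TypeCD-PFormula(ComplexCobordism)} to the case of the full flag bundle (so $q_{k}=k$, $m=q$) and to $H^{*}(-)$, where $F_{\L}(u,v)=u+v$, $\overline{u}=-u$, the factor $s^{\L}_{\emptyset}(t_{k})$ attached to a single variable equals $1$, and $z=0$ since $L$ is assumed trivial. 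Because $E\cong E^{\vee}$ has Chern roots $\pm y_{1},\ldots,\pm y_{n}$, the Segre series specializes to $s(E^{\vee};1/t_{i})=\prod_{j=1}^{n}\dfrac{t_{i}^{2}}{t_{i}^{2}-y_{j}^{2}}$; absorbing its leading factor $\prod_{i}t_{i}^{2n}$ into the exponent $\prod_{i}t_{i}^{2n-1}$ appearing in the formula, one obtains, for every polynomial $f\in H^{*}(X)[t_{1},\ldots,t_{q}]$,
\[
  \varpi_{1, \ldots, q\, *}\bigl(f(y_{1},\ldots,y_{q})\bigr)
  =\Bigl[\,\prod_{i=1}^{q}t_{i}^{-1}\,\Bigr]
   \Bigl(f(t_{1},\ldots,t_{q})\;
         \prod_{1\le i<j\le q}(t_{j}^{2}-t_{i}^{2})\;
         \prod_{i=1}^{q}\frac{1}{\prod_{j=1}^{n}(t_{i}^{2}-y_{j}^{2})}\Bigr),
\]
where each $1/(t_{i}^{2}-y_{j}^{2})$ is expanded as a formal power series in $t_{i}^{-1}$. (Equivalently one could quote the cohomology D--P formula of Darondeau--Pragacz directly.)

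Then I would substitute $f=\prod_{i=1}^{q}t_{i}^{\lambda_{i}+2n-2i+1}$, which is exactly $\bm{y}^{\lambda+\rho^{(2)}_{2q-1}+(2(n-q))^{q}}$ once one notes that $2n-2i+1=\bigl(2(q-i)+1\bigr)+2(n-q)$. Expanding $\prod_{1\le i<j\le q}(t_{j}^{2}-t_{i}^{2})=\sum_{\sigma\in S_{q}}\sgn(\sigma)\prod_{i=1}^{q}t_{i}^{2\sigma(i)-2}$ and $\prod_{j=1}^{n}(t_{i}^{2}-y_{j}^{2})^{-1}=\sum_{l\ge 0}h_{l}(\bm{y}_{n}^{2})\,t_{i}^{-2n-2l}$, and using $h_{l}(\bm{y}_{n}^{2})=s_{2l}(E^{\vee})$ together with $s_{\mathrm{odd}}(E^{\vee})=0$, the extraction of the coefficient of $\prod_{i}t_{i}^{-1}$ forces the constraint $2l_{i}=\lambda_{i}+2(\sigma(i)-i)$ for every $i$. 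If some $\lambda_{i}$ is odd this has no solution in non-negative integers, so the left-hand side vanishes; and $s^{(2)}_{\lambda}(E^{\vee})$ vanishes too, since in that case the $i$-th row of the determinant~\eqref{eqn:DefinitionQuadraticSchurFunctions} consists entirely of odd-indexed Segre classes. If every $\lambda_{i}=2\mu_{i}$ is even, then $l_{i}=\mu_{i}+\sigma(i)-i$ and the left-hand side equals $\sum_{\sigma\in S_{q}}\sgn(\sigma)\prod_{i=1}^{q}h_{\mu_{i}-i+\sigma(i)}(\bm{y}_{n}^{2})=\det\bigl(h_{\mu_{i}-i+j}(\bm{y}_{n}^{2})\bigr)_{1\le i,j\le q}=s_{\mu}(\bm{y}_{n}^{2})$, with the convention $h_{m}=0$ for $m<0$; on the other hand $s^{(2)}_{\lambda}(E^{\vee})=\det\bigl(s_{2\mu_{i}+2(j-i)}(E^{\vee})\bigr)_{1\le i,j\le q}=\det\bigl(h_{\mu_{i}-i+j}(\bm{y}_{n}^{2})\bigr)_{1\le i,j\le q}=s_{\mu}(\bm{y}_{n}^{2})$ as well, which completes the proof.

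Everything here is formal once the D--P formula is in hand; the one point that genuinely needs care is the exponent bookkeeping, and in particular recognizing that the shift $\rho^{(2)}_{2q-1}+(2(n-q))^{q}$ is exactly the one making the coefficient extraction reproduce the Jacobi--Trudi pattern with every column shift $(j-i)$ doubled --- the vanishing in the presence of an odd part and the identification with $s_{\mu}(\bm{y}_{n}^{2})$ are then immediate from standard facts about complete symmetric functions. I would also remark that running the same computation over $\L$, with $\mathscr{S}^{\L}$ in place of $s$, is the natural route to the cobordism analogue (the universal quadratic Schur functions) treated later in this section.
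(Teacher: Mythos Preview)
Your proof is correct and follows essentially the same approach as the paper: both apply the type~$C$ Darondeau--Pragacz formula to the monomial $\prod_{i}t_{i}^{\lambda_{i}+2n-2i+1}$, expand $\prod_{i<j}(t_{j}^{2}-t_{i}^{2})$ as a Vandermonde determinant, and extract coefficients to obtain a determinant in the Segre classes. The only difference is cosmetic: the paper leaves $s(E^{\vee};1/t_{i})$ unexpanded and reads off $\det\bigl(s_{\lambda_{i}+2(j-i)}(E^{\vee})\bigr)$ directly---which is the \emph{definition} of $s^{(2)}_{\lambda}(E^{\vee})$---whereas you expand the Segre series in terms of $h_{l}(\bm{y}_{n}^{2})$ and then do a parity case split; the latter is unnecessary since the vanishing for odd parts is already built into the determinant definition via $s_{\mathrm{odd}}(E^{\vee})=0$.
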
 
\begin{proof} 
For convenience of the readers, we shall provide the proof of the proposition along the 
same lines as in \cite{Darondeau-Pragacz2017}:   
By the  D--P formula  of type $C$  \cite[Theorem 2.1]{Darondeau-Pragacz2017},  
one can compute 
\begin{equation*} 
\begin{array}{llll} 
    \varpi_{1, \ldots, q *}(\bm{y}^{\lambda + \rho^{(2)}_{2q-1} + (2(n-q))^{q}}) 
  & =   \left [ \displaystyle{\prod_{i=1}^{q}} t_{i}^{2n-1}   \right ] 
        \left ( \displaystyle{\prod_{i=1}^{q}}  t_{i}^{\lambda_{i} + 2n - 2i + 1}  
                         \prod_{1 \leq i < j \leq q}  (t_{j}^{2} - t_{i}^{2})  
                        \prod_{i=1}^{q}   s(E^{\vee};  1/t_{i})     \right )  \medskip \\
  & =    \left [ \displaystyle{\prod_{i=1}^{q}} t_{i}^{2n-1}   \right ] 
        \left (  \displaystyle{\prod_{i=1}^{q}}  t_{i}^{\lambda_{i} + 2n - 2i + 1}  
                           \det \, (t_{i}^{2j-2})_{1 \leq i , j \leq q}  
                        \prod_{i=1}^{q}   s(E^{\vee};  1/t_{i})     \right )  \medskip \\
 & =  \left  [\displaystyle{\prod_{i=1}^{q}}  t_{i}^{2n-1}  \right ]   
                       \left ( 
                                                \det \, ( t_{i}^{\lambda_{i} + 2n - 2i + 2j - 1}  s(E^{\vee};  1/t_{i}))_{1 \leq i, j \leq q}    
               \right )   \medskip   \\
  & =   \det \, ([t_{i}^{-\lambda_{i} + 2i - 2j}]   s(E^{\vee};  1/t_{i}))_{1 \leq i, j \leq q}  \medskip \\
   & =  \det \, (s_{\lambda_{i}  - 2i + 2j} (E^{\vee}))_{1 \leq i, j \leq q} 
     = s_{\lambda}^{(2)} (E^{\vee}).  
\end{array} 
\end{equation*} 
\end{proof}

Analogous with Remark \ref{rem:SymmetrizingOperatorDescription(TypeA)}, 
the Gysin map $\varpi_{1, \ldots, q *}:  H^{*}(\F \ell^{C}_{1, \ldots, q}(E))  \longrightarrow H^{*}(X)$ 
also has a symmetrizing operator description (cf. Brion \cite[Proposition 2.1]{Brion1996}):
\begin{equation*} 
    \varpi_{1, \ldots, q *}  (f)   =   \sum_{\overline{w}  \in C_{n}/C_{n-q}} 
      w \cdot \left   [     
                                \dfrac{f}{\prod_{i=1}^{q}  2y_{i}  \cdot \prod_{1 \leq i \leq q} \prod_{i  < j \leq n}  (y_{i} + y_{j}) (y_{i} - y_{j})}  
                  \right   ] 
\end{equation*} 
for $f \in H^{*}(\F \ell^{C}_{1, \ldots, q}(E))$.   Here,   $C_{n} =   (\Z/2\Z)^{n}  \rtimes S_{n}$ 
is the Weyl group  of the type $C$ root system.  
Using this formula and Proposition \ref{prop:CharacterizationQSF}, 
the quadratic Schur function $s^{(2)}_{\lambda}(E^{\vee})$ is also given by the 
following formula: 
\begin{equation}   \label{eqn:SymmetrizingOperatorDescriptionQSF}  
\begin{array}{lll} 
   s^{(2)}_{\lambda}(E^{\vee})  
 & =    \displaystyle{\sum_{\overline{w}  \in C_{n}/C_{n-q}} }
      w \cdot \left   [     
                                \dfrac{ 
                                                \bm{y}^{\lambda + \rho^{(2)}_{2q -1}  + (2(n-q))^{q}}      
                                        }
                                        {
                                                      \prod_{i=1}^{q}  2y_{i}  \cdot  
                                     \prod_{1 \leq i \leq q} \prod_{i  < j \leq n}  (y_{i} + y_{j}) (y_{i} - y_{j})}  
                  \right   ]  \medskip \\
  & =   \displaystyle{\sum_{\overline{w}  \in C_{n}/C_{n-q}} } 
      w \cdot \left   [     
                                \dfrac{ 
                                                     \prod_{i=1}^{q}  y_{i}^{\lambda_{i} + 2n - 2i + 1}
                                        }
                                        {
                                                      \prod_{i=1}^{q}  2y_{i}  \cdot  
                                     \prod_{1 \leq i \leq q} \prod_{i  < j \leq n}  (y_{i} + y_{j}) (y_{i} - y_{j})}  
                  \right   ]   \medskip    \\
& =    \dfrac{1}{2^{q}}   \displaystyle{\sum_{\overline{w}  \in C_{n}/C_{n-q}} } 
      w \cdot \left   [     
                                \dfrac{ 
                                                     \prod_{i=1}^{q}  y_{i}^{\lambda_{i} + 2(n - i)}
                                        }
                                        {  
                                     \prod_{1 \leq i \leq q} \prod_{i  < j \leq n}
                                               (y_{i}^{2} -   y_{j}^{2}) 
                                               }  
                  \right   ].     \medskip 
\end{array}  
\end{equation}

\subsection{Universal quadratic Schur functions} 
\subsubsection{Definition of  universal quadratic Schur functions}   \label{subsubsec:DefinitionUQSF}  
As the (ordinary) Schur function  has been generalized to the 
universal Schur function, we wish to consider the {\it universal analogue}
 of   the (ordinary)  quadratic Schur function.    
As we reviewed in the previous subsection, the quadratic Schur function 
due to Darondeau--Pragacz  was defined as a determinantal form (\ref{eqn:DefinitionQuadraticSchurFunctions}).  
It seems difficult to generalize  this  determinantal form directly 
 to the complex cobordism theory.  
Instead, we shall utilize  the characterization of the quadratic Schur functions 
via the Gysin map (Proposition \ref{prop:CharacterizationQSF} and (\ref{eqn:SymmetrizingOperatorDescriptionQSF})).  
Namely, we adopt the following definition:  
\begin{defn} [Universal quadratic Schur functions]  \label{defn:DefinitionUQSF}  
Let $E \longrightarrow X$ be a symplectic vector bundle of rank $2n$. 
Given a partition $\lambda   \in \mathcal{P}_{n}$ with length 
$\ell (\lambda) = q \leq n$,   consider the type $C$ full flag bundle 
$\varpi_{1, \ldots, q}: \F \ell^{C}_{1, \ldots, q}(E) \longrightarrow X$, 
and the induced Gysin map 
$\varpi_{1, \ldots, q *}: MU^{*}(\F \ell^{C}_{1, \ldots, q}(E))  \longrightarrow MU^{*}(X)$. 
 Then the universal quadratic Schur  function $s^{\L, (2)}_{\lambda}(E^{\vee})  = s^{\L, (2)}_{\lambda}(\bm{y}_{n})$ 
corresponding to $\lambda$ is defined to be 
\begin{equation}   \label{eqn:DefinitionUQSF}   
\begin{array}{lll} 
  s^{\L,  (2)}_{\lambda}(E^{\vee})  & =  s^{\L, (2)}_{\lambda}(\bm{y}_{n})  
:=  \varpi_{1, \ldots, q *} (\bm{y}^{\lambda + \rho^{(2)}_{2q-1} + (2(n-q))^{q}})  \medskip \\
& =     
 \displaystyle{\sum_{\overline{w}  \in C_{n}/C_{n-q}}} 
      w \cdot \left   [     
                                \dfrac{ 
                                               \bm{y}^{\lambda + \rho^{(2)}_{2q-1} + (2(n-q))^{q}}
                                        }
                                        {
                                                      \prod_{i=1}^{q}  [2]_{\L}(y_{i})  \cdot  
                                     \prod_{1 \leq i \leq q} \prod_{i  < j \leq n}  (y_{i} +_{\L} y_{j}) (y_{i}  +_{\L}   \overline{y}_{j})}  
                  \right   ]  \medskip \\
& = \displaystyle{\sum_{\overline{w}  \in C_{n}/C_{n-q}}} 
      w \cdot \left   [     
                                \dfrac{ 
                                                     \prod_{i=1}^{q}  y_{i}^{\lambda_{i} + 2n - 2i + 1}
                                        }
                                        {
                                                      \prod_{i=1}^{q}  [2]_{\L}(y_{i})  \cdot  
                                     \prod_{1 \leq i \leq q} \prod_{i  < j \leq n}  (y_{i} +_{\L}  y_{j}) (y_{i}  +_{\L} \overline{y}_{j})}  
                  \right   ]. 
\end{array} 
\end{equation} 
\end{defn} 
It follows immediately from the definition that under  specialization from
 $F_{\L}(u, v) = u +_{\L} v$ to $F_{a}(u, v) = u + v$, 
the function $s^{\L, (2)}_{\lambda}(E^{\vee})$ reduces to the ordinary quadratic Schur function 
$s^{(2)}_{\lambda}(E^{\vee})$.   Under  specialization from $F_{\L}(u, v) = u +_{\L} v$ to 
$F_{m}(u, v) = u \oplus v$,  we obtain the {\it $K$-theoretic 
quadratic Schur  function}, denoted  as $G^{(2)}_{\lambda}(E^{\vee}) = G^{(2)}_{\lambda}(\bm{y}_{n})$, 
which should be regarded as  a  type $BCD$ analogue of  the  Grothendieck polynomial 
$G_{\lambda}(E^{\vee}) = G_{\lambda}(\bm{y}_{n})$. 
The precise definition is given as follows:
\begin{defn}    [$K$-theoretic quadratic Schur functions]  
 In the same setting as in Definition $\ref{defn:DefinitionUQSF}$,  the 
$K$-theoretic quadratic Schur function corresponding to the 
partition $\lambda \in \mathcal{P}_{n}$ of length $q \leq n$ is defined as 
\begin{equation*}
 G^{(2)}_{\lambda}(E^{\vee}) = G^{(2)}_{\lambda}(\bm{y}_{n})
   =    \displaystyle{\sum_{\overline{w}  \in C_{n}/C_{n-q}}} 
      w \cdot \left   [     
                                \dfrac{ 
                                                     \prod_{i=1}^{q}  y_{i}^{\lambda_{i} + 2n - 2i + 1}
                                        }
                                        {
                                                      \prod_{i=1}^{q}  y_{i} \oplus y_{i}   \cdot  
                                     \prod_{1 \leq i \leq q} \prod_{i  < j \leq n}  (y_{i}  \oplus   y_{j}) (y_{i} \ominus y_{j})}  
                  \right   ]. 
\end{equation*} 
\end{defn} 

\subsubsection{
Gysin formulas for universal quadratic Schur functions 
} 
By Definition \ref{defn:DefinitionUQSF}, the analogue of  \cite[Proposition 4.7]{Darondeau-Pragacz2017}
 in complex cobordism can be easily obtained: 
Let $E  \longrightarrow X$ be a symplectic vector bundle of rank $2n$, 
where we assume that the line bundle  $L$ is trivial.  
Consider the isotropic Grassmann bundle $\varpi_{q}:  \F \ell^{C}_{q}(E)   \longrightarrow X$
for $q = 1, \ldots, n$.  The induced Gysin map 
$\varpi_{q *}:  MU^{*}(\F \ell^{C}_{q} (E))  \longrightarrow MU^{*}(X)$
has the following symmetrizing operator description: 
\begin{equation*} 
  \varpi_{q *} (f)   =  \sum_{\overline{w} \in C_{n}/S_{q} \times C_{n-q}} 
              w  \cdot \left [ 
                                 \dfrac{f}{ 
                                                \prod_{i=1}^{q}  [2]_{\L}(y_{i})  
                                         \cdot  \prod_{\substack{1 \leq i \leq q, \\                                                                     
                                                                               q + 1 \leq j \leq n}
                                                              }  
                                                        (y_{i} +_{\L}  \overline{y}_{j})   
                                          \cdot \prod_{\substack{1 \leq i \leq q, \\
                                                                              i < j \leq n} 
                                                         } 
                                                           (y_{i} +_{\L}  y_{j})    
                                        } 
                            \right ] 
\end{equation*} 
for $f \in  MU^{*}(\F \ell^{C}_{q} (E))$.  
Next, we need to define the universal quadratic Schur functions 
for arbitrary sequences of positive integers.  Namely, for a sequence of positive 
integers $I = (I_{1}, \ldots, I_{q})  \in  (\Z_{> 0})^{q}$ (in this case, we say that 
the length $\ell (I)$ of $I$ is $q$), the corresponding universal quadratic Schur 
function $s^{\L, (2)}_{I}(E^{\vee})  = s^{\L, (2)}_{I}(\bm{y}_{n})$ 
is defined by the same expression as  (\ref{eqn:DefinitionUQSF}).  
With these preparations,  one can obtain the following result:
\begin{prop}     \label{prop:GeneralizationPragacz-RatajskiFormula(ComplexCobordism)} 
For  a sequence of non-negative integers
 $I  = (I_{1}, \dots, I_{q})  \in (\Z_{\geq 0})^{q}$, which satisfies 
the condition 
$I_{i}  >  2n - q - i + 1$ for $i = 1, \ldots, q$,  one has the 
following Gysin formula$:$
\begin{equation}   \label{eqn:FormulaUsingSchurFunctions(TypeC)}    
   \varpi_{q *}  (s^{\L}_{I}(U_{q}^{\vee}))   =  s^{\L, (2)}_{I - \rho_{q}}(E^{\vee}), 
\end{equation} 
where  $\rho_{q} =  (2n - q, 2n - q -1, \ldots, 2n - q   - i + 1, \ldots, 2n  - 2q + 1)$.   
\end{prop}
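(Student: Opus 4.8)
The plan is to factor the type $C$ full flag bundle through the isotropic Grassmann bundle and then invoke the type $A$ Gysin--Schur formula, so that the universal quadratic Schur function reappears essentially by Definition \ref{defn:DefinitionUQSF}. First I would recall, as in \S \ref{subsubsec:ConstructionIsotropicFullFlagBundle(TypeC)}, that a point of $\F \ell^{C}_{1, \ldots, q}(E)$ is an isotropic flag $U_{1} \subset \cdots \subset U_{q}$ in a fibre of $E$, so that forgetting $U_{1}, \ldots, U_{q-1}$ exhibits $\F \ell^{C}_{1, \ldots, q}(E)$ as the complete flag bundle $\F \ell(U_{q})$ \emph{of type $A$} of the tautological rank $q$ isotropic subbundle $U_{q} \to \F \ell^{C}_{q}(E)$ (every complete flag inside the isotropic bundle $U_{q}$ is automatically isotropic). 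Writing $\varpi^{1, \ldots, q}_{q}: \F \ell^{C}_{1, \ldots, q}(E) = \F \ell(U_{q}) \to \F \ell^{C}_{q}(E)$ for this projection, one has $\varpi_{1, \ldots, q} = \varpi_{q} \circ \varpi^{1, \ldots, q}_{q}$, and $y_{1}, \ldots, y_{q}$ are precisely the $MU^{*}$-theory Chern roots of $U_{q}^{\vee}$ for this type $A$ flag bundle.

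Next I would use the type $A$ identity $\varpi_{*}(\bm{y}^{\mu + \rho_{q-1}}) = s^{\L}_{\mu}(V^{\vee})$ for the complete flag bundle of a rank $q$ vector bundle $V$ (Nakagawa-Naruse \cite[Corollary 4.8]{Nakagawa-Naruse2018}). This remains valid when $\mu$ is replaced by an arbitrary sequence $I \in (\Z_{\geq 0})^{q}$, with $s^{\L}_{I}$ read in the extended sense of \S \ref{subsec:USF}, since it then follows at once from the symmetrizing-operator description of $\varpi_{*}$ (the full-flag case of Remark \ref{rem:SymmetrizingOperatorDescription(TypeA)}). Applying it to $V = U_{q}$ gives $\varpi^{1, \ldots, q}_{q *}(\bm{y}^{I + \rho_{q-1}}) = s^{\L}_{I}(U_{q}^{\vee})$, and pushing this forward along $\varpi_{q}$, using $\varpi_{q *} \circ \varpi^{1, \ldots, q}_{q *} = \varpi_{1, \ldots, q *}$, yields $\varpi_{q *}(s^{\L}_{I}(U_{q}^{\vee})) = \varpi_{1, \ldots, q *}(\bm{y}^{I + \rho_{q-1}})$.

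It then remains to match exponents. The $i$-th entry of $I + \rho_{q-1}$ is $I_{i} + q - i = (I_{i} - (2n - q - i + 1)) + (2n - 2i + 1)$, so $\bm{y}^{I + \rho_{q-1}} = \bm{y}^{(I - \rho_{q}) + \rho^{(2)}_{2q-1} + (2(n-q))^{q}}$ with $\rho_{q}$ as in the statement. The hypothesis $I_{i} > 2n - q - i + 1$ is precisely what makes $I - \rho_{q}$ a sequence of positive integers of length $q$, so $s^{\L, (2)}_{I - \rho_{q}}(E^{\vee})$ is defined, and by Definition \ref{defn:DefinitionUQSF} applied to $I - \rho_{q}$ one has $\varpi_{1, \ldots, q *}(\bm{y}^{(I - \rho_{q}) + \rho^{(2)}_{2q-1} + (2(n-q))^{q}}) = s^{\L, (2)}_{I - \rho_{q}}(E^{\vee})$. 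Combining with the previous paragraph gives (\ref{eqn:FormulaUsingSchurFunctions(TypeC)}).

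I do not expect a serious obstacle: the proof is the geometric factorization of the flag bundle, the compatibility of Gysin maps with composition, and exponent arithmetic. The only point requiring mild care is the use of the type $A$ identity and of Definition \ref{defn:DefinitionUQSF} for non-monotone index sequences rather than genuine partitions; this is legitimate because the underlying Gysin formulas (the full-flag case of Theorem \ref{thm:TypeAD-PFormula(ComplexCobordism)}, and the definition of $s^{\L, (2)}$ as a type $C$ Gysin image) hold for arbitrary polynomials in the $y_{i}$, and the stated range condition on $I$ is exactly what keeps $I - \rho_{q}$ in the permitted region.
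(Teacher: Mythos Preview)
Your proof is correct and reaches the same conclusion as the paper, but the route is somewhat different in spirit. The paper argues purely at the level of symmetrizing operators: it expands $s^{\L}_{I}(U_{q}^{\vee})$ via its defining sum over $S_{q}$, applies the explicit symmetrizing-operator formula for $\varpi_{q *}$ (a sum over $C_{n}/(S_{q}\times C_{n-q})$), observes that the denominator in the latter is $S_{q}$-invariant so that the two sums merge into a single sum over $C_{n}/C_{n-q}$, and then rewrites the exponent to recognise the definition of $s^{\L,(2)}_{I-\rho_{q}}(E^{\vee})$. You instead argue geometrically: you factor $\varpi_{1,\ldots,q} = \varpi_{q}\circ\varpi^{1,\ldots,q}_{q}$ through the isotropic Grassmannian, invoke the type~$A$ identity $\varpi^{1,\ldots,q}_{q *}(\bm{y}^{I+\rho_{q-1}}) = s^{\L}_{I}(U_{q}^{\vee})$ as a black box, and then read off the result from Definition~\ref{defn:DefinitionUQSF}. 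The two arguments are equivalent once one unpacks the symmetrizing-operator descriptions, but yours is cleaner in that it reuses the type~$A$ Gysin--Schur formula rather than reproving it inside the computation; the paper's version has the minor advantage of being self-contained and making the $S_{q}$-invariance step visible. Your handling of the non-partition index sequences and of the exponent arithmetic is correct and matches exactly what the paper needs.
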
 
\begin{proof} 
By (\ref{eqn:DefinitionUSF}) and  the symmetrizing operator description 
of $\varpi_{q *}$ mentioned above, one can compute 
\begin{equation*} 
\begin{array}{llll} 
   \varpi_{q *}(s^{\L}_{I}(U_{q}^{\vee}))  
 & =  \varpi_{q *} (s^{\L}_{I}(\bm{y}_{q}))  \medskip \\
 & =   \displaystyle{\sum_{\overline{w}  \in C_{n}/S_{q} \times C_{n-q}}} 
                  w \cdot \left [ 
                                 \dfrac{
                                               \sum_{v \in S_{q}}  
                                                      v \cdot \left [ 
                                                      \dfrac{\prod_{i=1}^{q} y_{i}^{I_{i} + q-i}} 
                                                              { \prod_{1 \leq i < j \leq q} (y_{i} +_{\L} \overline{y}_{j})    
                                                             } 
                                                                  \right ] 
                                             }{ 
                                                \prod_{i=1}^{q}  [2]_{\L}(y_{i})  
                                         \cdot  \prod_{\substack{1 \leq i \leq q, \\                                                                     
                                                                               q + 1 \leq j \leq n}
                                                              }  
                                                        (y_{i} +_{\L}  \overline{y}_{j})   
                                          \cdot \prod_{\substack{1 \leq i \leq q, \\
                                                                              i < j \leq n} 
                                                         } 
                                                           (y_{i} +_{\L}  y_{j})    
                                        } 
                            \right ].                \medskip \\
\end{array} 
\end{equation*} 
Since $\prod_{i=1}^{q} [2]_{\L} (y_{i})$, $\prod_{\substack{1 \leq i \leq q, \\
                                                                              q + 1 \leq j \leq n} 
                                                               } (y_{i} +_{\L}  \overline{y}_{j})$, 
and $\prod_{\substack{1 \leq i \leq q, \\
                                 i < j \leq n} 
                }  (y_{i} +_{\L}  y_{j})$ 
are all $S_{q}$-invariant,  the above expression is equal to 
\begin{equation*} 
\begin{array}{llll} 
 &  \displaystyle{\sum_{\overline{w} \in C_{n}/S_{q} \times C_{n-q}}}  
     \sum_{v \in S_{q}}  
            wv \cdot \left [ 
                                    \dfrac{ \prod_{i=1}^{q} y_{i}^{I_{i} + q-i}  } 
                                      {   
                                        \prod_{i=1}^{q} [2]_{\L} (y_{i})  
                                        \cdot  \prod_{1 \leq i \leq q} 
                                                  \prod_{i < j \leq n} 
                                                  (y_{i} +_{\L} \overline{y}_{j}) (y_{i} +_{\L} y_{j})           
                                      }                     
                            \right ]    \medskip \\
  & =  \displaystyle{\sum_{\overline{u} \in C_{n}/C_{n-q}}}  
           u \cdot \left [ 
                                    \dfrac{ \prod_{i=1}^{q} y_{i}^{(I_{i} - 2n + q + i-1)  + 2n - 2i + 1}  } 
                                      {   
                                        \prod_{i=1}^{q} [2]_{\L} (y_{i})  
                                        \cdot  \prod_{1 \leq i \leq q} 
                                                  \prod_{i < j \leq n} 
                                                  (y_{i} +_{\L} \overline{y}_{j}) (y_{i} +_{\L} y_{j})           
                                      }                     
                            \right ]    \medskip \\
  & =  s^{\L, (2)}_{I  - \rho_{q}} (E^{\vee}) 
\end{array} 
\end{equation*} 
as desired. 
\end{proof}  
As a special case of the above proposition ($q = n$), one  immediately obtains
a complex cobordism analogue of  the result of 
 Pragacz--Ratajski  \cite[Theorem 5.13]{Pragacz-Ratajski1997}:  
\begin{cor} [Pragacz--Ratajski formula in complex cobordism]  
Let $I = (I_{1},  \ldots, I_{n})  \in (\Z_{\geq 0})^{n}$ be a sequence of 
non-negative integers satisfying $I_{i} > n - i + 1 \; (i = 1, \ldots, n)$. 
Then, the following Gysin formula holds for the Lagrangian Grassmann bundle
$\varpi_{n}:   LG_{n}(E)  = \F \ell_{n}^{C}(E)  \longrightarrow X$$:$  
\begin{equation}  \label{eqn:Pragacz-RatajskiTheorem(ComplexCobordism)}   
    \varpi_{n *}  (s^{\L}_{I}(U_{n}^{\vee}))  = s^{\L, (2)}_{I - \rho_{n}} (E^{\vee}). 
\end{equation} 
\end{cor}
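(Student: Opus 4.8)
The plan is to derive the corollary as the $q = n$ case of Proposition~\ref{prop:GeneralizationPragacz-RatajskiFormula(ComplexCobordism)}. The first observation is purely geometric: for the rank-$2n$ symplectic bundle $E \longrightarrow X$, a rank-$n$ isotropic subbundle is automatically maximal isotropic, i.e., Lagrangian, so the isotropic Grassmann bundle $\F \ell^{C}_{n}(E)$ occurring in Proposition~\ref{prop:GeneralizationPragacz-RatajskiFormula(ComplexCobordism)} when $q = n$ is precisely the Lagrangian Grassmann bundle $LG_{n}(E)$ of the corollary, with $U_{n}$ its tautological Lagrangian subbundle and $\varpi_{n} : LG_{n}(E) \longrightarrow X$ its structure map.

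Next I would check that the hypotheses and the notation match under the substitution $q = n$. The condition $I_{i} > 2n - q - i + 1$ of Proposition~\ref{prop:GeneralizationPragacz-RatajskiFormula(ComplexCobordism)} becomes $I_{i} > n - i + 1$, which is exactly the hypothesis of the corollary; in particular every entry of $I - \rho_{n}$ is then $\geq 1$, so the right-hand side $s^{\L, (2)}_{I - \rho_{n}}(E^{\vee})$ is well defined by the extension of~(\ref{eqn:DefinitionUQSF}) to sequences of positive integers. Likewise the vector $\rho_{q} = (2n - q,\, 2n - q - 1,\, \ldots,\, 2n - 2q + 1)$ appearing in Proposition~\ref{prop:GeneralizationPragacz-RatajskiFormula(ComplexCobordism)} specializes at $q = n$ to $(n, n-1, \ldots, 2, 1)$, which is the staircase partition $\rho_{n}$ of \S \ref{subsec:USF}, so the two occurrences of the symbol $\rho_{n}$ agree. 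Feeding these into Proposition~\ref{prop:GeneralizationPragacz-RatajskiFormula(ComplexCobordism)} gives $\varpi_{n *}\bigl(s^{\L}_{I}(U_{n}^{\vee})\bigr) = s^{\L, (2)}_{I - \rho_{n}}(E^{\vee})$, which is precisely~(\ref{eqn:Pragacz-RatajskiTheorem(ComplexCobordism)}).

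Because the statement is a verbatim specialization, there is no real obstacle here: the entire content has already been established in Proposition~\ref{prop:GeneralizationPragacz-RatajskiFormula(ComplexCobordism)}, which itself rests on the type~$C$ Darondeau-Pragacz formula (Theorem~\ref{thm:TypeCD-PFormula(ComplexCobordism)}) together with Definition~\ref{defn:DefinitionUQSF}. The only items that genuinely deserve a sentence are the identification $\F \ell^{C}_{n}(E) = LG_{n}(E)$ and the remark that the two meanings of $\rho_{n}$ in play coincide; alternatively one could simply reread the symmetrizing-operator computation in the proof of Proposition~\ref{prop:GeneralizationPragacz-RatajskiFormula(ComplexCobordism)} with $q$ replaced by $n$ throughout, but this adds nothing new. (Specializing $F_{\L} \to F_{a}$ then recovers the cohomological Pragacz-Ratajski formula, and $F_{\L} \to F_{m}$ its $K$-theoretic counterpart.)
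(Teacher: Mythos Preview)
Your proposal is correct and matches the paper's approach exactly: the paper states that the corollary follows immediately as the special case $q = n$ of Proposition~\ref{prop:GeneralizationPragacz-RatajskiFormula(ComplexCobordism)}. Your additional care in verifying that the hypothesis $I_i > 2n - q - i + 1$ specializes to $I_i > n - i + 1$ and that the shift vector $\rho_q$ becomes the staircase $\rho_n$ is a welcome bit of bookkeeping that the paper leaves implicit.
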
 
If we reduce the universal formal group law $F_{\L}(u, v) = u +_{\L} v$ 
to  $F_{a}(u, v) = u + v$,  then the formula (\ref{eqn:Pragacz-RatajskiTheorem(ComplexCobordism)}) 
reduces to 
\begin{equation} \label{eqn:Pragacz-RatajskiTheorem(Cohomology)}   
   \varpi_{n *} (s_{I}(U_{n}^{\vee}))  =  s^{(2)}_{I - \rho_{n}} (E^{\vee}). 
\end{equation} 
If one of the parts of $I - \rho_{n}$ is odd,  then the right-hand side of
 (\ref{eqn:Pragacz-RatajskiTheorem(Cohomology)}) is zero.  
Therefore,  the element $s_{I}(U_{n}^{\vee})$ 
has a nonzero image under $\varpi_{n *}$ only if $I$ is of the form 
$2J + \rho_{n}$ for some $J_{n}  \in (\Z_{\geq 0})^{n}$.  
If $I = 2J + \rho_{n}$, then one obtains 
\begin{equation*} 
     \varpi_{n *} (s_{I}(U_{n}^{\vee}))  =  s^{(2)}_{2J}(E^{\vee})  = s^{[2]}_{J}(E^{\vee})
\end{equation*} 
as observed in  \S  \ref{subsubsec:DefinitionQSF}.  
This is the original Pragacz--Ratajski formula.

\subsubsection{Generating function for  universal quadratic Schur functions} 
As an application of  our  Darondeau--Pragacz formulas in complex cobordism, 
one can derive the generating function for the universal quadratic Schur functions. 
According to Theorem \ref{thm:TypeCD-PFormula(ComplexCobordism)},    
the D--P formula for  the Gysin map 
$\varpi_{1, \ldots, q *}:  MU^{*}(\F \ell^{C}_{1, \ldots, q}(E))  \longrightarrow MU^{*}(X)$
takes the following form: 
For a polynomial $f(t_{1}, \ldots, t_{q})  \in MU^{*}(X)[t_{1}, \ldots, t_{q}]$, 
one has 
\begin{equation*} 
\begin{array}{lll} 
   \varpi_{1, \ldots, q *}(f(y_{1}, \ldots, y_{q}))  
  &  =   \left [  \displaystyle{\prod_{i=1}^{q}}  t_{i}^{2n-1}  \right ] 
        \left (  
                        f(t_{1}, \ldots, t_{q})  \displaystyle{\prod_{1 \leq i < j \leq q}} (t_{j} +_{\L} \overline{t}_{i}) 
                                            (t_{j} +_{\L}  t_{i})  
         \right.   \medskip \\
   &  \hspace{7cm} \left.   \times  \;  \displaystyle{\prod_{i=1}^{q}}  
                                            \mathscr{S}^{\L} (E^{\vee};  1/t_{i})  
                      \right ).   \medskip 
\end{array} 
\end{equation*} 
This formula, together with Definition \ref{defn:DefinitionUQSF}, 
yields the 
following expression: 
\begin{equation*} 
\begin{array}{llll} 
    s^{\L,  (2)}_{\lambda} (E^{\vee})  
      & =  \varpi_{1, \ldots, q *} (\bm{y}^{\lambda + \rho^{(2)}_{2q-1} + (2(n-q))^{q}})  \medskip \\
    &  =     \left [ \displaystyle{\prod_{i=1}^{q}} t_{i}^{2n-1}   \right ] 
        \left ( \displaystyle{\prod_{i=1}^{q}}  t_{i}^{\lambda_{i} + 2n - 2i + 1}  
                         \prod_{1 \leq i < j \leq q}  (t_{j} +_{\L} \overline{t}_{i}) (t_{j} +_{\L} t_{i})   
                        \prod_{i=1}^{q}   \mathscr{S}^{\L} (E^{\vee};  1/t_{i})     \right )  \medskip \\
   & =  \left [\displaystyle{\prod_{i=1}^{q}}  t_{i}^{-\lambda_{i}}   \right ]  
            \left (    \displaystyle{  
                                           \prod_{1 \leq i < j \leq q}
                                         }     \dfrac{t_{j} +_{\L} \overline{t}_{i}}{t_{j}} \cdot 
                                               \dfrac{t_{j} +_{\L}  t_{i}}{t_{j}}   
                                            \prod_{i=1}^{q}  \mathscr{S}^{\L} (E^{\vee};  1/t_{i})  
        \right ).    \medskip 
\end{array} 
\end{equation*} 
Since the $MU^{*}$-theory Chern roots of $E^{\vee}$ are given by $y_{i},  \overline{y}_{i} \; (1 \leq i \leq n)$,  from (\ref{eqn:SegreSeries(ComplexCobordism)}),  
the Segre series of $E^{\vee}$ is given as follows: 
\begin{equation*} 
   \mathscr{S}^{\L} (E^{\vee}; u)  = \left.  
                                       \dfrac{1}{\mathscr{P}^{\L} (z)}  
                                      \dfrac{z^{2n}} { \prod_{j=1}^{n} (z +_{\L}  y_{j}) (z +_{\L}  \overline{y}_{j}) } \right |_{z = u^{-1}}. 
\end{equation*} 
Let us reformulate the above result in terms of symmetric functions: 
For   independent variables $\bm{y}_{n} = (y_{1}, \ldots, y_{n})$, we set 
\begin{equation*} 
\begin{array}{rll} 
     s^{\L, (2)}  (u)  & :=    
                                       \dfrac{1}{\mathscr{P}^{\L} (u)}  
                                      \dfrac{u^{2n}} { \prod_{j=1}^{n} (u +_{\L}  y_{j}) (u +_{\L}  \overline{y}_{j}) },  \medskip \\
     s^{\L, (2)} (u_{1}, \ldots, u_{q}) &  := \displaystyle{\prod_{i=1}^{q}}  s^{\L, (2)}(u_{i})
                                                 \displaystyle{  
                                           \prod_{1 \leq i < j \leq q}
                                         }     \dfrac{t_{j} +_{\L} \overline{t}_{i}}{t_{j}} \cdot 
                                               \dfrac{t_{j} +_{\L}  t_{i} }{t_{j}}.  \medskip 
\end{array}     
\end{equation*} 
Then,   we have the following result: 
\begin{theorem}   \label{thm:GFUQSF} 
For a partition $\lambda = (\lambda_{1}, \ldots, \lambda_{q})$ of length $\ell (\lambda) 
= q \leq n$,  the  universal quadratic Schur function $s^{\L, (2)}(\bm{y}_{n})$ 
is the coefficient of  $u_{1}^{-\lambda_{1}}  \cdots u_{q}^{-\lambda_{q}}$ 
in $s^{\L, (2)}(u_{1},  \ldots, u_{q})$, that is, 
\begin{equation*} 
     s^{\L, (2)}_{\lambda}(\bm{y}_{n}) =  [u_{1}^{-\lambda_{1}}  \cdots  u_{q}^{-\lambda_{q}}] 
                                          (s^{\L, (2)}(u_{1},  \ldots, u_{q})). 
\end{equation*} 
\end{theorem}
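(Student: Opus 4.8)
The plan is to read the statement off from the type~$C$ Darondeau--Pragacz formula together with Definition~\ref{defn:DefinitionUQSF}; concretely, it amounts to re-packaging the computation carried out in the lines immediately preceding the theorem. First I would specialize Theorem~\ref{thm:TypeCD-PFormula(ComplexCobordism)} to the full isotropic flag bundle $\varpi_{1,\ldots,q}\colon \F\ell^{C}_{1,\ldots,q}(E)\longrightarrow X$ (so $q_{k}=k$, $m=q$) with $L$ trivial, i.e.\ $z=0$; since $s^{\L}_{\emptyset}(L)=1$ for a line bundle $L$, this gives for any polynomial $f(t_{1},\ldots,t_{q})\in MU^{*}(X)[t_{1},\ldots,t_{q}]$
\[
  \varpi_{1,\ldots,q\,*}(f(y_{1},\ldots,y_{q}))
  =\Bigl[\prod_{i=1}^{q}t_{i}^{2n-1}\Bigr]
     \Bigl(f(t_{1},\ldots,t_{q})\prod_{1\le i<j\le q}(t_{j}+_{\L}\overline{t}_{i})(t_{j}+_{\L}t_{i})\prod_{i=1}^{q}\mathscr{S}^{\L}(E^{\vee};1/t_{i})\Bigr).
\]

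Next I would take $f$ to be the monomial $\bm{y}^{\lambda+\rho^{(2)}_{2q-1}+(2(n-q))^{q}}=\prod_{i=1}^{q}t_{i}^{\lambda_{i}+2n-2i+1}$, so that by Definition~\ref{defn:DefinitionUQSF} the left-hand side is exactly $s^{\L,(2)}_{\lambda}(E^{\vee})$. Absorbing the monomial exponents into the coefficient-extraction operator (equivalently, dividing each factor $(t_{j}+_{\L}\overline{t}_{i})$ and $(t_{j}+_{\L}t_{i})$ by $t_{j}$ and shifting) turns $[\prod t_{i}^{2n-1}]$ into $[\prod t_{i}^{-\lambda_{i}}]$, reproducing the identity
\[
  s^{\L,(2)}_{\lambda}(E^{\vee})
  =\Bigl[\prod_{i=1}^{q}t_{i}^{-\lambda_{i}}\Bigr]
     \Bigl(\prod_{1\le i<j\le q}\frac{t_{j}+_{\L}\overline{t}_{i}}{t_{j}}\cdot\frac{t_{j}+_{\L}t_{i}}{t_{j}}\prod_{i=1}^{q}\mathscr{S}^{\L}(E^{\vee};1/t_{i})\Bigr)
\]
displayed just before the statement.

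Finally, since the $MU^{*}$-theory Chern roots of $E^{\vee}$ are $y_{1},\ldots,y_{n},\overline{y}_{1},\ldots,\overline{y}_{n}$, formula~(\ref{eqn:SegreSeries(ComplexCobordism)}) yields $\mathscr{S}^{\L}(E^{\vee};1/u)=\dfrac{1}{\mathscr{P}^{\L}(u)}\dfrac{u^{2n}}{\prod_{j=1}^{n}(u+_{\L}y_{j})(u+_{\L}\overline{y}_{j})}=s^{\L,(2)}(u)$, so the integrand above is exactly $s^{\L,(2)}(t_{1},\ldots,t_{q})$; renaming $t_{i}$ to $u_{i}$ gives the asserted equality $s^{\L,(2)}_{\lambda}(\bm{y}_{n})=[u_{1}^{-\lambda_{1}}\cdots u_{q}^{-\lambda_{q}}](s^{\L,(2)}(u_{1},\ldots,u_{q}))$.

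The step I expect to be the main (if modest) obstacle is the bookkeeping of the formal-Laurent-series conventions: the operator $[\,\cdot\,]$ must be applied with the expansion rule fixed in \S\ref{subsubsec:FundamentalGysinFormulaProjectiveBundle(ComplexCobordism)} (each $1/(t+_{\L}\overline{y}_{j})$ expanded in $t^{-1}$, each $1/\mathscr{P}^{\L}(t)$ in $t$), and one must check that the exponent shifts used to pass from $[\prod t_{i}^{2n-1}]$ to $[\prod t_{i}^{-\lambda_{i}}]$ are legitimate for such series and commute with the product over $i$. One should also invoke the extension of $s^{\L,(2)}_{I}$ to arbitrary sequences (recalled before Proposition~\ref{prop:GeneralizationPragacz-RatajskiFormula(ComplexCobordism)}) so that the negative-degree coefficients on the right-hand side are meaningful; with these conventions fixed, the proof is a direct reformulation of the preceding computation.
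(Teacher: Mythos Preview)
Your proposal is correct and follows essentially the same approach as the paper: the paper's argument (given in the paragraphs immediately preceding the theorem) also specializes Theorem~\ref{thm:TypeCD-PFormula(ComplexCobordism)} to the full isotropic flag bundle with $z=0$, substitutes the monomial from Definition~\ref{defn:DefinitionUQSF}, shifts the exponent in the coefficient extraction from $2n-1$ to $-\lambda_{i}$, and then identifies $\mathscr{S}^{\L}(E^{\vee};1/u)$ with $s^{\L,(2)}(u)$ via~(\ref{eqn:SegreSeries(ComplexCobordism)}). Your additional remarks on the formal-Laurent-series conventions are prudent but not new obstacles, since they are exactly the conventions already in force throughout \S\ref{subsubsec:FundamentalGysinFormulaProjectiveBundle(ComplexCobordism)}.
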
 
If we specialize  $F_{\L}(u, v) = u +_{\L} v$ to $F_{a}(u, v) = u + v$, then 
the factor $\displaystyle{\prod_{1 \leq i < j \leq r}}  \dfrac{t_{j} +_{\L} \overline{t}_{i}}{t_{j}} 
   \cdot \dfrac{t_{j} +_{\L}  t_{i} }{t_{j}}$ reduces to 
\begin{equation*} 
   \prod_{1 \leq i < j \leq q}  \dfrac{t_{j} - t_{i}} {t_{j}} \cdot  \dfrac{ t_{j} + t_{i}}{t_{j}}  
  = \prod_{1 \leq i < j \leq q}  \left  ( 1 - t_{i}^{2}/t_{j}^{2} \right )  
  = \det \, (t_{i}^{2(j-i)} )_{1 \leq i, j \leq q}. 
\end{equation*} 
Therefore, by Theorem \ref{thm:GFUQSF}, the determinantal formula (\ref{eqn:DefinitionQuadraticSchurFunctions})
  can be recovered.

In $K$-theory, we can say more: 
Let $E$ be a symplectic vector bundle of rank $2n$ with $K$-theoretic Chern roots 
$y_{i}, \overline{y}_{i}  = \ominus y_{i} \; (i = 1, \ldots, n)$.   Then, 
by (\ref{eqn:SegreSeries(ComplexCobordism)}), the $K$-theoretic 
Segre series $G(E^{\vee}; 1/u)$ is given by  
\begin{equation*}  
 G(E^{\vee}; 1/u)   = 
                                       \dfrac{1}{1 - \beta u}  
                                      \dfrac{u^{2n}} { \prod_{j=1}^{n} (u \oplus y_{j}) (u  \ominus y_{j}) }. 
\end{equation*} 
Then,  by Theorem \ref{thm:GFUQSF}, we have 
\begin{equation}  \label{eqn:G^{(2)}_lambda(E)}   
    G^{(2)}_{\lambda} (E^{\vee})  
    =  \left [\displaystyle{\prod_{i=1}^{q}}  t_{i}^{-\lambda_{i}}   \right ]  
            \left (    \displaystyle{  
                                           \prod_{1 \leq i < j \leq q}
                                         }     \dfrac{t_{j}  \ominus t_{i}}{t_{j}} \cdot 
                                              \dfrac{t_{j}  \oplus t_{i}}{t_{j}}   
                                            \prod_{i=1}^{q}  G (E^{\vee};  1/t_{i})  
        \right ).   
\end{equation}
Notice that the following identity holds:  
\begin{equation*} 
  \prod_{1 \leq i < j \leq q}     \dfrac{t_{j}  \ominus t_{i}}{t_{j}} 
                                        \cdot  \dfrac{t_{j}  \oplus t_{i} }{t_{j}}   
  = \prod_{1 \leq i <j \leq q}   \left ( 1  -    \dfrac{t_{i}^{2}}{1 - \beta t_{i}} \left /
                                                          \dfrac{t_{j}^{2}}{1 - \beta t_{j}} \right.   
                                          \right ) 
  = \det \,  ((1 - \beta t_{i})^{i-j}  t_{i}^{2(j-i)} )_{1 \leq i, j \leq q}.  
\end{equation*} 
Therefore, by (\ref{eqn:G^{(2)}_lambda(E)}), we have the following 
result:\footnote{
An analogous   determinantal formula for the  Grothendieck 
polynomials was recently  obtained by Hudson--Ikeda--Matsumura--Naruse \cite[Theorem 3.13]{HIMN2017}.  
}
\begin{theorem}   [Determinantal formula for the $K$-theoretic quadratic Schur function]  \label{thm:DeterminantalFormulaKQSF}   
\begin{equation}    
   G^{(2)}_{\lambda}(E^{\vee})  =    
     \det \, \left (  \sum_{k=0}^{\infty}  \binom{i - j} {k}  (-\beta)^{k} 
                      G_{\lambda_{i} + 2(j-i) + k}  (E^{\vee})  
                \right )_{1 \leq i, j \leq q}.  
\end{equation}  
\end{theorem}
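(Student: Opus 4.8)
The plan is to read the formula off directly from the generating‑function expression (\ref{eqn:G^{(2)}_lambda(E)}) together with the determinantal identity for the Vandermonde‑type factor recorded just above the statement. First I would substitute
\[
   \prod_{1 \leq i < j \leq q} \dfrac{t_{j} \ominus t_{i}}{t_{j}} \cdot \dfrac{t_{j} \oplus t_{i}}{t_{j}} = \det \, \bigl( (1 - \beta t_{i})^{i-j} t_{i}^{2(j-i)} \bigr)_{1 \leq i, j \leq q}
\]
into (\ref{eqn:G^{(2)}_lambda(E)}) and expand the determinant over the symmetric group, writing it as $\sum_{\sigma \in S_{q}} \sgn(\sigma) \prod_{i=1}^{q} (1-\beta t_{i})^{i - \sigma(i)} t_{i}^{2(\sigma(i) - i)}$. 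The structural point is that in the resulting sum the variable $t_{i}$ occurs in exactly one factor, namely $(1-\beta t_{i})^{i-\sigma(i)} t_{i}^{2(\sigma(i)-i)} G(E^{\vee}; 1/t_{i})$, so the coefficient operator $[\prod_{i} t_{i}^{-\lambda_{i}}]$ distributes as a product of one‑variable coefficient operators, giving
\[
   G^{(2)}_{\lambda}(E^{\vee}) = \sum_{\sigma \in S_{q}} \sgn(\sigma) \prod_{i=1}^{q} [t^{-\lambda_{i}}] \bigl( (1-\beta t)^{i-\sigma(i)} t^{2(\sigma(i)-i)} G(E^{\vee}; 1/t) \bigr).
\]

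Next I would evaluate each one‑variable coefficient. Expanding $(1-\beta t)^{i-\sigma(i)} = \sum_{k \geq 0} \binom{i-\sigma(i)}{k} (-\beta)^{k} t^{k}$ as a power series in $t$, and recalling from \S \ref{subsubsec:SegreClasses(ComplexCobordism)} (via the $K$‑theoretic Segre series (\ref{eqn:K-theoreticSegreSeries})) that $G(E^{\vee}; u) = \sum_{m} G_{m}(E^{\vee}) u^{m}$, hence $[t^{-m}] G(E^{\vee}; 1/t) = G_{m}(E^{\vee})$, one gets
\[
   [t^{-\lambda_{i}}] \bigl( (1-\beta t)^{i-\sigma(i)} t^{2(\sigma(i)-i)} G(E^{\vee}; 1/t) \bigr) = \sum_{k \geq 0} \binom{i-\sigma(i)}{k} (-\beta)^{k} G_{\lambda_{i} + 2(\sigma(i) - i) + k}(E^{\vee}).
\]
Substituting this back and recognizing $\sum_{\sigma} \sgn(\sigma) \prod_{i} (\cdots)$ as the expansion of the $q \times q$ determinant with $(i,j)$‑entry $\sum_{k \geq 0} \binom{i-j}{k}(-\beta)^{k} G_{\lambda_{i} + 2(j-i) + k}(E^{\vee})$ gives the claimed formula. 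For completeness I would also note that the auxiliary determinantal identity is itself elementary: $\frac{t_{j} \ominus t_{i}}{t_{j}} \cdot \frac{t_{j} \oplus t_{i}}{t_{j}} = 1 - s_{i}/s_{j}$ with $s_{i} := t_{i}^{2}/(1-\beta t_{i})$, so the left‑hand product equals $\prod_{i<j}(1 - s_{i}/s_{j}) = \det(s_{i}^{j-i})_{1\le i,j\le q}$ by the classical factorization of a Vandermonde‑type product, and $s_{i}^{j-i} = (1-\beta t_{i})^{i-j} t_{i}^{2(j-i)}$.

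The computation presents no real obstacle; the one point that needs a word of care is the interpretation of the infinite sum over $k$. When $i < j$ one has $\binom{i-j}{k} \neq 0$ for every $k \geq 0$, so the entries of the determinant are genuine infinite series in $\beta$. I would handle this exactly as in the analogous Grothendieck‑polynomial determinant of Hudson--Ikeda--Matsumura--Naruse \cite[Theorem 3.13]{HIMN2017} to which the statement refers, i.e.\ as an identity of the relevant expansions, emphasizing that the one‑variable coefficient extraction $[t^{-\lambda_{i}}](\cdots)$ is applied to the product of the power series $(1-\beta t)^{i-\sigma(i)}$ in $t$ with the Laurent‑type expansion $G(E^{\vee}; 1/t)$ in $t^{-1}$, in accordance with the conventions fixed in \S \ref{subsubsec:FundamentalGysinFormulaProjectiveBundle(ComplexCobordism)}. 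Modulo that bookkeeping, the only nontrivial inputs are (\ref{eqn:G^{(2)}_lambda(E)}) and the determinantal identity, both already at hand.
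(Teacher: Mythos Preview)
Your proposal is correct and follows essentially the same route as the paper: the paper establishes the determinantal identity for the Vandermonde-type factor, invokes (\ref{eqn:G^{(2)}_lambda(E)}), and then simply asserts the result, whereas you spell out the intermediate steps (expanding the determinant over $S_{q}$, separating the variables, expanding $(1-\beta t)^{i-j}$ binomially, and extracting the one-variable coefficients via the Segre series) that the paper leaves to the reader. Your added remark on the convergence/bookkeeping of the infinite sums in $\beta$ is a useful clarification but does not depart from the paper's line of argument.
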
 

\section{Appendix} 
\subsection{Quillen's Residue Formula}   \label{subsec:QuillenResidueFormula}  
As we mentioned in \S  \ref{subsubsec:FundamentalGysinFormulaProjectiveBundle(ComplexCobordism)}, 
we proceed with the calculation  of Quillen's residue formula, i.e.,  
the right-hand side of (\ref{eqn:QuillenResidueFormula}) in the following manner:  
First, we   consider the case where $f(t) = t^{N}$,  
a monomial in $t$ of degree $N \geq 0$.    
 We expand $\mathscr{P}^{\L}(t, y_{j})$ as 
$\sum_{\ell_{j} = 0}^{\infty}  \mathscr{P}_{\ell_{j}} (y_{j}) t^{\ell_{j}}$ 
for $j = 1, \ldots, n$.   Then,   we compute 
\begin{equation*} 
\begin{array}{lll} 
          \dfrac{t^{N}}{\mathscr{P}^{\L} (t)  \prod_{j=1}^{n} (t +_{\L} \overline{y}_{j})}  
  & =  t^{N}  \times  \dfrac{1}{\mathscr{P}^{\L}(t)}  
       \times \dfrac{1}{ \prod_{j=1}^{n}  \dfrac{t - y_{j}} {\mathscr{P}^{\L} (t, y_{j})} } \medskip \\
  & =  t^{N - n}    \times  \dfrac{1}{\mathscr{P}^{\L} (t)}  
        \times   \displaystyle{\prod_{j=1}^{n}}  \mathscr{P}^{\L} (t, y_{j})  \times \prod_{j=1}^{n}  \dfrac{1}{1 - y_{j}t^{-1}}   \medskip \\
  & =  t^{N-n}   \times 
           \left (  \displaystyle{\sum_{k=0}^{\infty}} [\C P^{k}] t^{k} \right )  
            \times \displaystyle{\prod_{j=1}^{n}}  
           \left  ( \sum_{\ell_{j} = 0}^{\infty}  \mathscr{P}_{\ell_{j}} (y_{j}) t^{\ell_{j}} \right ) 
          \times \left ( \sum_{m=0}^{\infty}  h_{m} (\bm{y}_{n})  t^{-m}  \right ).   \medskip
\end{array} 
\end{equation*} 
For brevity, we put 
\begin{equation*} 
   \prod_{j=1}^{n}   \left  ( \sum_{\ell_{j} = 0}^{\infty}  \mathscr{P}_{\ell_{j}} (y_{j}) t^{\ell_{j}} \right ) 
   =  \sum_{\ell = 0}^{\infty}  
      \left (\sum_{ \substack{\ell_{1} + \cdots + \ell_{n} = \ell  \\
                        \ell_{1} \geq 0, \ldots, \ell_{n} \geq 0}  
                     }   \prod_{j=1}^{n} \mathscr{P}_{\ell_{j}}(y_{j}) 
              \right ) t^{\ell} 
  =  \sum_{\ell = 0}^{\infty}  \mathscr{P}_{\ell}(\bm{y}_{n})  t^{\ell}. 
\end{equation*} 
Then,   the computation continues as 
\begin{equation*} 
\begin{array}{lll} 
  &  t^{N-n}  \times  
        \left (  \displaystyle{\sum_{k=0}^{\infty}} [\C P^{k}] t^{k} \right )  
          \times   \left \{   \displaystyle{\sum_{r= -\infty}^{\infty}} 
                      \left (  \sum_{\ell = r}^{\infty}  \mathscr{P}_{\ell}(\bm{y}_{n})  h_{\ell - r}(\bm{y}_{n})  \right ) t^{r}        
                 \right \}            \medskip \\
     =  &  t^{N - n}  \times \left (  \displaystyle{\sum_{k=0}^{\infty}} [\C P^{k}] t^{k} \right )  
       \times \left \{  
                      \displaystyle{\sum_{r=0}^{\infty}} 
                        \left ( 
                              \sum_{\ell = r}^{\infty}   
                                    \mathscr{P}_{\ell} (\bm{y}_{n})  h_{\ell - r} (\bm{y}_{n}) 
                     \right ) t^{r} 
              +    \displaystyle{\sum_{r=1}^{\infty}}  
                            \left ( 
                                      \sum_{\ell = 0}^{\infty}  \mathscr{P}_{\ell}(\bm{y}_{n}) h_{\ell + r}(\bm{y}_{n})  \right )  t^{-r}   
                   \right \}   \medskip   \\
  =  &  t^{N - n}  \times  
         \left [   \displaystyle{\sum_{s = 0}^{\infty}} 
                 \left \{   \sum_{k= 0}^{s}  \sum_{\ell = s - k}^{\infty}  
                               [\C P^{k}]  \mathscr{P}_{\ell} (\bm{y}_{n})  h_{k + \ell - s} (\bm{y}_{n})  
                      \right \}   t^{s}     \right.    \medskip \\
  &   \hspace{6cm}    \left.              +  \displaystyle{\sum_{s = -\infty}^{\infty}} 
                         \left \{   \sum_{k = s + 1}^{\infty}  \sum_{\ell = 0}^{\infty}  [\C P^{k}]  \mathscr{P}_{\ell}(\bm{y}_{n})  h_{k +\ell - s} (\bm{y}_{n})    \right \}   t^{s}  \right ]   \medskip  \\
=  &  t^{N - n}  \times  
         \left [   \displaystyle{\sum_{s = 0}^{\infty}} 
                 \left \{   \sum_{k= 0}^{s}  \sum_{\ell = s - k}^{\infty}  
                               [\C P^{k}]  \mathscr{P}_{\ell} (\bm{y}_{n})  h_{k + \ell - s} (\bm{y}_{n})  
                      \right \}   t^{s}     \right.    \medskip \\           
      + &  \left.  \displaystyle{\sum_{s = 0}^{\infty}} 
                         \left \{   \sum_{k = s + 1}^{\infty}  \sum_{\ell = 0}^{\infty}  [\C P^{k}]  \mathscr{P}_{\ell}(\bm{y}_{n})  h_{k +\ell - s} (\bm{y}_{n})    \right \}   t^{s}   
     +      \displaystyle{\sum_{s = 1}^{\infty}} 
                         \left \{   \sum_{k =  0}^{\infty}  \sum_{\ell = 0}^{\infty}  [\C P^{k}]  \mathscr{P}_{\ell}(\bm{y}_{n})  h_{k +\ell +  s} (\bm{y}_{n})    \right \}   t^{-s}        \right ].  \medskip  \\
\end{array} 
\end{equation*} 
Next,  we extract the coefficient of $t^{-1}$  in the above formal Laurent series.  
In the case  of $N \geq n$,  one sees immediately  that 
the coefficient of $t^{-1}$ is 
\begin{equation*} 
      \sum_{k =  0}^{\infty}  \sum_{\ell = 0}^{\infty}
        [\C P^{k}]  \mathscr{P}_{\ell}(\bm{y}_{n})  h_{k +\ell +  N - n + 1} (\bm{y}_{n}). 
\end{equation*} 
In the case of  $0 \leq N < n$,  the coefficient of $t^{-1}$  is  
\begin{equation*} 
\begin{array}{lll} 
  &      \displaystyle{\sum_{k= 0}^{n - N - 1}}  \sum_{\ell = n - N - 1 - k}^{\infty}  
                               [\C P^{k}]  \mathscr{P}_{\ell} (\bm{y}_{n})  h_{k + \ell + N - n + 1} (\bm{y}_{n})  
+  \sum_{k = n - N }^{\infty}  \sum_{\ell = 0}^{\infty}  [\C P^{k}]  \mathscr{P}_{\ell}(\bm{y}_{n})  h_{k +\ell + N - n + 1} (\bm{y}_{n})    \medskip \\
  & =    \displaystyle{\sum_{k= 0}^{\infty}}  \sum_{\ell = n - N - 1 - k}^{\infty}  
                               [\C P^{k}]  \mathscr{P}_{\ell} (\bm{y}_{n})  h_{k + \ell + N - n + 1} (\bm{y}_{n}).    
\end{array} 
\end{equation*} 
Summing up the above calculation, we get the following result: 
\begin{equation}   \label{eqn:QuillenResidueFormulaCalculation}   
   \underset{t = 0}{\mathrm{Res}'} 
          \dfrac{t^{N}} {\mathscr{P}^{\L} (t)  \prod_{j=1}^{n} (t +_{\L}  \overline{y}_{j}) } 
   =  \left \{     
           \begin{array}{lll} 
                  &  \displaystyle{\sum_{k =  0}^{\infty}}  \sum_{\ell = 0}^{\infty}
        [\C P^{k}]  \mathscr{P}_{\ell}(\bm{y}_{n})  h_{k +\ell +  N - n + 1} (\bm{y}_{n})     \quad  (N \geq n),  \medskip \\
                  &   \displaystyle{\sum_{k= 0}^{\infty}}  \sum_{\ell = n - N - 1 - k}^{\infty}  
                               [\C P^{k}]  \mathscr{P}_{\ell} (\bm{y}_{n})  h_{k + \ell + N - n + 1} (\bm{y}_{n})     \quad   (0 \leq N  < n).  \medskip 
            \end{array} 
      \right. 
\end{equation} 
For a general polynomial of the form $f(t) = \sum_{N=0}^{M} a_{N} t^{N}  \in MU^{*}(X)[t]$, 
one has   $\varpi_{1 *} (f(y_{1}))  = \varpi_{1 *} \left  (\sum_{N=0}^{M} a_{N} y_{1}^{N}  \right )  
           = \sum_{N=0}^{M} a_{N} \varpi_{1 *} (y_{1}^{N})$  
since  the Gysin map $\varpi_{1 *}$ is an $MU^{*}(X)$-homomorphism.  
From this, we can calculate $\varpi_{1 *}(f(y_{1}))$.  


\end{document}